\theoremstyle{plain}
\newtheorem{thm}{Theorem}[section]
\newtheorem{lem}[thm]{Lemma}
\newtheorem{prop}[thm]{Proposition}
\newtheorem{cor}[thm]{Corolary}
\newtheorem{defn}{Definition}[section]
\newtheorem{rem}{Remark}[section]
\newtheorem{claim}{\bf Theorem}
\def\R{\mathbb{R}}
\def\sph{\mathbb{S}}
\def\pc{\mathbb{C}}
\def\D{\mathbb{D}}
\title{A Picard type theorem for Hyperbolic Gauss Map of CMC-1 
Surfaces in Hyperbolic 3-space and de Sitter 3-space}
\author{N. A. de Andrade and L. P. Jorge}
\begin{document}

\maketitle

\begin{abstract}
In a recent paper Jorge and Mercuri proved that the image of 
Gauss map of a complete non flat minimal surfaces in $\mathbb R^3$ with finite 
total curvature omits at
most $2$ points. In this work we follow their idea and prove a
similar result for CMC-1 with finite total curvature in $\mathbb{H}^{3}$
and CMC-1 faces with finite type and regular ends in $\mathbb{S}_{1}^{3}$.  
\end{abstract}

\section{Introduction}

\ \ \  A classical problem in theory of minimal surfaces of  
$\mathbb{R}^{3}$ is to determine the number of the missing points on the 
image of the Gauss map, in other words, if $M$ is a complete minimal surface 
and $G: M\longrightarrow \mathbb{S}^{2}$ is the Gauss map of $M$, how many 
points  in $\mathbb{S}^{2} \backslash G(M)$ are there? Fujimoto showed that if 
$M$ is not the plane, then $\sharp (\mathbb{S}^{2}\backslash G(M)) \leqslant 4$ 
and is exactly $4$ for the Scherk's surface. Osserman, studied this problem 
under the hypothesis that the total curvature is finite. In this case, he 
showed that $\sharp (\mathbb{S}^{2}\backslash G(M)) \leqslant 3$.
 Recently, Jorge and Mercuri \cite{kn:J-M} improved Osserman's estimate in the  
following theorem
\begin{thm}[Jorge, Mercuri]\label{thm:thJM}
 If the Gauss map of a complete minimal surface of $\mathbb{R}^3$ with finite 
total curvature misses $3$ or more points of the sphere then it is a plane.
\end{thm}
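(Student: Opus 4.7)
The plan is to extend the Gauss map to the conformal compactification of $M$ and derive a contradiction via a degree/ramification count whenever three spherical values are omitted. By Huber's theorem, since $M$ is complete with finite total curvature, $M$ is conformally equivalent to $\overline M\setminus\{p_1,\ldots,p_k\}$ for some compact Riemann surface $\overline M$ of genus $\gamma$, with the $p_j$ being the ends. By Osserman's classical extension theorem, the Gauss map $G\colon M\to \mathbb{S}^2\cong\hat{\mathbb C}$ extends to a meromorphic map $\tilde G\colon\overline M\to\hat{\mathbb C}$ of some degree $D$, and the total curvature equals $-4\pi D$. Since $M$ is a plane exactly when $\tilde G$ is constant, the goal reduces to showing $D=0$ under the standing assumption that $G(M)$ omits three points.

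Assume $G(M)$ misses three distinct values $a_1,a_2,a_3\in\hat{\mathbb C}$. Because $\tilde G$ is nonconstant meromorphic on the compact surface $\overline M$, every value is attained with total multiplicity $D$; since the $a_i$ are not attained on the interior $M$, each fiber $\tilde G^{-1}(a_i)$ lies entirely in the end set $\{p_1,\ldots,p_k\}$. The three fibers are pairwise disjoint, so summing the local degrees yields
\[
3D \;\le\; \sum_{j=1}^{k} \operatorname{mult}_{p_j}(\tilde G).
\]
I would then bound the right-hand side from above using Riemann--Hurwitz for $\tilde G$, which controls the total ramification on $\overline M$ by $2D+2\gamma-2$, and bound $D$ from below by the Jorge--Meeks/Osserman inequality $D\ge\gamma+k-1$ together with a finer identity expressing $D$ in terms of the orders of the Weierstrass data $(g,\omega)$ at each end. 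Combined with the displayed inequality, these ingredients should force a strict contradiction, and hence $D=0$.

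The step I expect to be the main obstacle is making the counting sharp enough to close in every genus. In genus zero the naive combination of Riemann--Hurwitz with Osserman's bound $D\ge k-1$ already yields a contradiction, but for $\gamma\ge 1$ the bounds leave a small window where $D>0$ is still consistent with three omitted values. Closing this window requires exploiting the full Weierstrass representation $(g,\omega)$: each end $p_j$ at which $\tilde G$ takes one of the omitted values forces a strictly larger pole order of the height differential $\omega$, which in turn raises the Jorge--Meeks end contribution at $p_j$ and sharpens the lower bound on $D$ by just enough to eliminate the window. Carrying out this end-by-end refinement is, I expect, the technical heart of the proof.
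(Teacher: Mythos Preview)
The paper does not prove this statement. Theorem~1.1 is quoted from the external reference \cite{kn:J-M} and then invoked as a black box in the proof of the main result; the only comment the paper offers is the single sentence ``Theorem~1.1 follows from analysis of immersions with Gauss map missing three points.'' There is therefore no proof in this paper against which your proposal can be compared.

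On its own merits: your outline is the classical Osserman counting argument, and as you correctly observe it closes in genus zero (from $3D\le k+2D+2\gamma-2$ together with the Jorge--Meeks bound $D\ge\gamma+k-1$ one obtains $1\le\gamma$, a contradiction when $\gamma=0$) but leaves a window for $\gamma\ge1$. Your proposed mechanism for closing that window---that an end mapping to an omitted value forces a strictly larger pole order of $\omega$ and hence a strictly larger Jorge--Meeks contribution at that end---is asserted rather than demonstrated, and it is not clear it is correct: the Jorge--Meeks end multiplicity is already determined by the pole orders of $\omega$ and $g^{2}\omega$ at $p_j$, and there is no evident \emph{extra} gain when $\tilde G(p_j)$ happens to equal one of the omitted $a_i$ rather than some generic value. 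As written, then, your proposal is a valid reduction followed by a conjectural endgame; whether that endgame can actually be carried out, and whether it resembles the Jorge--Mercuri argument, cannot be decided from the present paper.
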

This estimative is sharp since catenoid's Gauss map omits exactly 2 points. 
So the problem is fully answered for this class of surfaces.
Theorem 1.1 follows from analysis of immersions with Gauss map missing three 
points.

 A natural question arises: Which other class of surfaces have the same 
conformal type of the complex plane and Gauss map as meromorphic functions?

 As first choice we have the Bryant's Surfaces. Those surfaces share many 
properties with minimal surfaces of $\mathbb{R}^3$ and act like an analogue 
of minimal surfaces in hyperbolic 3-space. Indeed, Collin, Hauswirth and 
Rosenberg \cite{kn:C-H-R-1} proved an analogue of Osserman's result for Bryant's 
surfaces, 
that is, the hyperbolic Gauss map of a complete Bryant's surface with finite 
total curvature misses at most 3 points.
 
 Kawakami \cite{kn:K} proved partial results for other classes of surfaces. He 
studied 
the hyperbolic Gauss map of algebraic Bryant Surfaces and CMC-1 faces in de 
Sitter 3-space. Algebraic Bryant Surfaces are Bryant Surfaces with finite total 
curvature dual (dual in the Umehara and Yamada's sense). CMC-1 faces are 
spacelike CMC-1 surfaces in de Sitter space with a specific kind of singularities. 
Kawakami showed that hyperbolic Gauss map of a complete algebraic Bryant's 
Surfaces and of a complete CMC-1 face can miss 3 points at most, otherwise it is 
constant.    
 
In this paper, we use Jorge and Mercuri's ideas to prove the following 
theorem:
\begin{claim}\label{th}
Let $M$ be one of the following:
\begin{enumerate}
 \item[(1)] a  complete Bryant surface with finite total 
curvature or,
\item[(2)] a complete algebraic Bryant surface,
\item[(3)]  a complete CMC-1 face of finite type with elliptic ends.
\end{enumerate}
  Then the hyperbolic Gauss map $G$ of $M$ 
omit at most 2 points, otherwise $G$ is constant. Further, in case of constant 
hyperbolic Gauss map $M$ is a  horosphere for items (1) and (2) and a 
horosphere space like type surface for item (3).  
\end{claim}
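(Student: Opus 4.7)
The plan is to mirror, in each of the three settings, the strategy behind Theorem~\ref{thm:thJM}. The first step, shared by all three cases, is to use the respective finiteness hypothesis to realize $M$ conformally as $\overline{M}\setminus\{p_{1},\dots,p_{n}\}$ for a compact Riemann surface $\overline{M}$, and to show that the hyperbolic Gauss map $G$ extends meromorphically to a map $\hat{G}:\overline{M}\to\mathbb{CP}^{1}$. For cases (1) and (2) this is furnished by the Bryant and Umehara--Yamada representations together with the asymptotic analysis of regular and algebraic ends (Bryant; Collin--Hauswirth--Rosenberg; Kawakami), while for case (3) one combines the analogous representation of Fujimori--Rossman--Umehara--Yamada with the elliptic-ends hypothesis, which is precisely what controls $\hat{G}$ across the singular punctures.

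Assume for contradiction that $G$ omits at least three distinct values of $\mathbb{CP}^{1}$. After postcomposing with a Möbius transformation (an isometry of the ideal boundary which preserves the CMC-1 condition) one may assume $\{0,1,\infty\}\subset \mathbb{CP}^{1}\setminus G(M)$. Since $\hat{G}$ is a holomorphic map from a compact Riemann surface to $\mathbb{CP}^{1}$, it is either constant or surjective. In the surjective case, every preimage of $0$, $1$ and $\infty$ under $\hat{G}$ must lie in the finite puncture set; counting with multiplicity this forces $3\deg(\hat{G})\leq n$ and, more importantly, assigns to every end $p_{i}$ an asymptotic value $\hat{G}(p_{i})\in\{0,1,\infty\}$.

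The central obstacle is ruling out this surjective scenario, and it is here that the Jorge--Mercuri idea enters. For each puncture I would plug into the respective Weierstrass-type representation: the secondary Gauss map $g$, the holomorphic one-form $\omega$, and the algebraic relation expressing $\hat{G}$ in terms of $(g,\omega)$ (or in terms of the dual data $(g^{\#},\omega^{\#})$ in case~(2)). Completeness of the induced metric (or of the metric on the dual surface in case (2)) near $p_{i}$, combined with the constraint $\hat{G}(p_{i})\in\{0,1,\infty\}$, gives a sharp bound on the orders of vanishing/poles of $\hat{G}$, $g$ and $\omega$ at $p_{i}$. Summing these local inequalities over all ends and comparing with the Chern--Osserman-type total curvature identity valid in each case (Bryant for (1), Kawakami's dual formula for (2), the face version for (3)) produces, as in Jorge--Mercuri, a numerical contradiction unless $\hat{G}$ is constant. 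The most delicate part of this step is case (3): the elliptic-ends and face-singularity contributions make the local models of $\hat{G}$ and of the metric more intricate, and one has to verify that the order accounting survives through the branching.

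Once the surjective alternative is eliminated, $\hat{G}$ and hence $G$ must be constant. The proof would then close by invoking the classification of CMC-1 immersions with constant hyperbolic Gauss map: a Bryant surface with constant $G$ is a horosphere, and the analogous rigidity for CMC-1 faces in $\mathbb{S}_{1}^{3}$ (following Fujimori et al.) identifies the limiting object as a spacelike horosphere. This both yields the sharp bound of at most two omitted points in all three categories and pins down the rigid case, completing the proof.
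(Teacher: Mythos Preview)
Your outline diverges from the paper's proof in a substantial way, and it also contains a gap in case~(1).

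\textbf{The paper's route versus yours.} The paper does \emph{not} redo the Jorge--Mercuri order-counting argument in each of the three ambient geometries. Instead it reduces every case to Theorem~\ref{thm:thJM} itself. For a complete algebraic Bryant surface one passes to the dual data $(G,\omega^{\sharp})$ and then to the Lawson cousin $\psi:M\to\mathbb{R}^{3}$, which is a complete minimal surface of finite total curvature whose ordinary Gauss map is literally $G$; Theorem~\ref{thm:thJM} then applies verbatim. Case~(1) is reduced to case~(2) (see below), and case~(3) is reduced to case~(2) by observing that the lift metric of the CMC-1 face coincides with the dual metric of the associated Bryant immersion built from the same Aiyama data, so that immersion is algebraic. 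Thus the paper never touches the local order bookkeeping you describe; the dual/cousin construction absorbs all of that into the already-proved Euclidean theorem. Your plan to rerun the end-by-end analysis with the hyperbolic data $(g,\omega)$ and the algebraic relation $G=dA/dC$ is in principle feasible, but it is considerably heavier: the link between $G$ and $(g,\omega)$ goes through the ODE $F^{-1}dF=\begin{pmatrix}g&-g^{2}\\1&-g\end{pmatrix}\omega$, so the pole/zero orders of $G$ at an end are not read off from $(g,\omega)$ as directly as in the Weierstrass case. The paper's trick is precisely to sidestep this by making $G$ the \emph{first} entry of a Weierstrass pair.

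\textbf{The gap.} Your first step asserts that in all three cases $G$ extends meromorphically across the punctures. For case~(1) this is false in general: a complete Bryant surface of finite total curvature may have \emph{irregular} ends, at which $G$ has an essential singularity (this is exactly Definition~2.2 in the paper). The references you cite (Bryant, Collin--Hauswirth--Rosenberg, Kawakami) do not supply a meromorphic extension in that situation. The paper disposes of this possibility in one line with the Great Picard Theorem: if some end is irregular, $G$ already omits at most two values near that end and there is nothing to prove; only then may one assume all ends are regular, whence $M$ is algebraic and case~(2) applies. Without this dichotomy your argument for case~(1) does not start.
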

All results in this theorem are sharp. 
 
A surface is {\em parabolic} if positive harmonic functions are constant.  
 By \cite{kn:J-Meeks} the volume growth of geodesics ball $B_r(x_o)$ of a 
complete 
minimal surface into $\R^3$ with finite total curvature is of type $r^2$. In 
\S5 we study the decay of curvature of those surfaces of theorem 1 and 
concluded that all of them have $\text{vol}_M(B_r)\leq cr^2,\ r\geq r_0$ form 
some constant $c_0$ and fixed $r_0>0.$ This implies next result.

\begin{claim}\label{th2}
 Let $M$ be one complete surface of type:
\begin{enumerate}
 \item[(1)] minimal surface with finite total curvature  into $\R^3$;
 \item[(2)] Algebraic Bryant surface with finite total curvature;
 \item[(3)] Algebraic Bryant surface endowed with the dual metric;
 \item[(4)] CMC-1 Face surface of finite type, elliptic ends endowed with the lift metric.
\end{enumerate}
Then $M$ is parabolic. If $M$ has a K\"ahler structure then all bounded 
holomorphic map $g\colon M\to\pc$ is constant. 
\end{claim}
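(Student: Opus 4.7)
The plan is to deduce both assertions from the quadratic volume growth $\mathrm{vol}_M(B_r(x_0))\le c r^2$ that the paper announces will be established in \S5. For parabolicity, I would invoke the standard Karp--Grigor'yan test: a complete Riemannian manifold is parabolic whenever
\[
\int^{\infty}\frac{r\,dr}{\mathrm{vol}_M(B_r(x_0))}=+\infty.
\]
Under the hypothesis $\mathrm{vol}_M(B_r)\le c r^2$ for $r\ge r_0$, the integrand is bounded below by $1/(cr)$ for large $r$, so the integral diverges and parabolicity is immediate. This handles all four items uniformly, since the only input used is the volume estimate of \S5 (which for item (1) is the classical Jorge--Meeks result, and for items (2)--(4) is what the author says will be derived from the curvature decay at the ends under the algebraicity/finite-type/elliptic-end assumptions).

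For the second claim, suppose $M$ is Kähler and $g\colon M\to\pc$ is bounded holomorphic. On any Kähler manifold a standard computation gives $\Delta|g|^2=2|\nabla g|^2\ge 0$, so $|g|^2$ is subharmonic, and by hypothesis it is bounded above by $\sup_M|g|^2<\infty$. One of the equivalent characterisations of parabolicity states that bounded-above subharmonic functions must be constant; applying this to $|g|^2$ we conclude $|g|\equiv\text{const}$. A holomorphic map with constant modulus is constant (by the open mapping theorem, or equivalently because the Cauchy--Riemann equations force $dg\equiv 0$ when $|g|$ is constant), which gives the Liouville conclusion.

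The genuinely hard part is not here but upstream, in the derivation of the quadratic volume bound for items (2)--(4). Once one has the curvature decay $|K|=O(r^{-2-\varepsilon})$ at each end together with a good model for the ends (horospherical for Bryant ends, the analogous model for regular/elliptic CMC-1 ends in $\dsi$), the estimate $\mathrm{vol}_M(B_r)\le c r^2$ follows by a Jorge--Meeks-type asymptotic analysis; this is what \S5 is expected to supply. Given that estimate, the arguments above are essentially formal, relying only on the general parabolicity criterion and the subharmonicity of $|g|^2$ on Kähler manifolds, so the proof of Theorem~\ref{th2} itself is short.
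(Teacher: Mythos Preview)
Your proposal is correct and matches the paper's approach: the paper's proof of Theorem~\ref{th2} consists precisely of invoking the quadratic volume bound from \S5 and then applying the Grigor'yan integral criterion $\int^\infty r\,dr/\mathrm{vol}(B_r)=\infty$ (citing \cite{kn:Grig}, Theorem~11.14) to conclude parabolicity. The paper does not actually write out the Liouville step for bounded holomorphic maps, so your subharmonicity argument for $|g|^2$ supplies more detail than the paper itself; otherwise the two arguments are identical.
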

\begin{rem}
 All surfaces in items (1),\ (2),\ (3),\ (4), are isometric to a complete 
minimal surface $M$ into $\R^3$ with finite total curvature. Then they have a 
natural K\"ahler structure. If the Gauss map $G$ of $M$ miss at least one point 
$y\in\sph^2$  we can rotated $M$ inside $\R^3$ to making $y=(0,\ 0,\ 1).$ If 
$g\colon M\to\pc$ is the stereographic projection of $G$ then $g$ is 
holomorphic. Hence all surfaces described in the theorem \ref{th2} are K\"ahler. 
 Unfortunately, if $g(M)=\pc\setminus\{a,\ b\}$ we can not do a lifting of $g$ 
to the unit disk $\D$ by a modular function ot get a easy proof of theorem 1.1 
unless $M$ is simply connected.      
\end{rem}


\section{Hyperbolic Space}

 
\ \ \  Let $\mathbb{L}^{4}$ be Lorentz-Minkowski 4-space with Lorentzian metric
 
\begin{equation}\label{HS1}
  \langle (x_{0}, x_{1}, x_{2}, x_{3}), (y_{0}, y_{1}, y_{2}, y_{3}) \rangle = 
-x_{0}y_{0} + x_{1}y_{1} + x_{2}y_{2} + x_{3}y_{3}
\end{equation}
 
 Define the hyperbolic $3$-space 
 
\begin{eqnarray*}
 \mathbb{H}^{3} = \{ v \in \mathbb{L}^{4} | \langle v, v\rangle = -1, x_{0}(v) 
> 0  \} 
\end{eqnarray*}
  
with the induced metric from $\mathbb{L}^{4}$. Give to $\mathbb{H}^{3}$ the 
orientation which $v_{1}, v_{2},v_{3}$ forms a oriented base of 
$T_{v}\mathbb{H}^{3}$ if, and only if, $v, v_{1}, v_{2},v_{3}$ forms a base of 
$\mathbb{L}^{4}$. We have that $ \mathbb{H}^{3}$ is a Riemannian manifold 
3-dimensional simply connected with constant mean curvature $-1$. 
 
 The space $\mathbb{H}^{3}$ is not compact, but can be compacted by adding a 
"sphere at infinity" $\mathbb{S}_{\infty}^{2}$ in such way that the rigid 
motions of $\mathbb{H}^{3}$ extend to homeomorphisms of 
$\overline{\mathbb{H}^{3}} = \mathbb{S}_{\infty}^{2} \cup \mathbb{H}^{3}$.

 Identify $ \mathbb{L}^{4}$ with the set $Herm(2) = \{X^{*} = X \}$ of $2 
\times 2$ hermitian matrices in the follow way
 
 \begin{equation}\label{HS2}
  (x_{0}, x_{1}, x_{2}, x_{3}) \mapsto \left[\begin{array}{c c }
                                            x_{0} + x_{3}&x_{1} + ix_{2}\\      
                                     
                                            x_{1} - ix_{2}&x_{0} - x_{3}
                                            \end{array}\right]
 \end{equation}

 where $i = \sqrt{-1}$.
 
 With this identification, we can see $ \mathbb{H}^{3}$ as $$ \mathbb{H}^{3} = 
\{ XX^{*} | X \in SL(2, \ \mathbb{C}) \} \ \ \ \ \ X^{*} = \overline{X}^{t}$$ 
with the metric 
 $$\langle X, Y \rangle = - \frac{1}{2}tr(X,\tilde{Y}), \ \langle X, X\rangle = 
-det(X)$$ where $\tilde{Y}$ is the cofactor of $Y$. 
 $PSL(2, \mathbb{C}) = SL(2, \mathbb{C})/\{\pm id\}$ acts isometrically on 
$\mathbb{H}^{3}$  by 
 
 \begin{equation}\label{HS3}
  X \mapsto YXY^{*} , \ X \in \mathbb{H}^{3} \, \ Y \in PSL(2, \mathbb{C})
 \end{equation}
 
 Note that the map $PSL(2, \mathbb{C}) \longrightarrow Herm(2)$ such that $F 
\mapsto FF^{*}$ take values in $\mathbb{H}^{3}$.    

\subsection{Bryant Surfaces}

\ \ \ Surfaces of constant mean curvature (CMC) are objects of great interest 
because they are critic points of functional area with respect to volume 
preserving variations that fix their boundaries. Minimal surfaces are a special 
class of CMC surfaces that are critical for all variations, not just volume 
preserving ones. Beside that, minimal surfaces can be described by a pair of 
holomorphic functions, called Weierstrass data. Many properties of minimal 
surfaces can be obtained using this Weierstrass data.

In 1970, Lawson described a correspondence between minimal surfaces and CMC-1 
in $\mathbb{H}^{3}$, the "Lawson's correspondence". So, as the minimal surfaces 
are described by a pair of holomorphic function, the Weierstrass data, one may 
ask if CMC-1 surfaces in $\mathbb{H}^{3}$ can be described in a similar way. 
Such question was answered by Robert Bryant \cite{kn:B} in 1987. He showed that this 
class of surfaces can be described by a pair of holomorphic functions, known as 
the \textit{Bryant data}. 
 
\begin{thm}[Bryant Representation]\label{thBriantRep}
 Let $M$ be a Riemann's surface and $F: M \longrightarrow SL(2, \mathbb{C})$ 
conformal immersion such that$det(F^{-1}dF) = 0$. Let $\phi: M \longrightarrow 
\mathbb{H}^{3}$ be such that $\phi = FF^{*}$. So $\phi$ is a immersion from $M$ 
to $\mathbb{H}^{3}$ with constant mean curvature 1. Conversely, if $\phi: M 
\longrightarrow \mathbb{H}^{3}$ is a immersion with constant mean curvature 1, 
there is a holomorphic lift $\phi$ to universal recovery $\tilde{F}: \tilde{M} 
\longrightarrow SL(2, \mathbb{C})$, such that $det(F^{-1}dF) = 0$
 and $\phi = \tilde{F}\tilde{F}^{*}$.
 
\end{thm}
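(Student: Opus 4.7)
My plan is to prove the two implications separately and to treat the converse as the main technical challenge.

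For the forward assertion, I would set $\alpha := F^{-1}dF$. Since $F$ takes values in $SL(2,\mathbb{C})$, differentiating $\det F \equiv 1$ gives $\operatorname{tr}\alpha \equiv 0$, so $\alpha$ is an $\mathfrak{sl}(2,\mathbb{C})$-valued $1$-form. Together with the hypothesis $\det\alpha = 0$, this makes $\alpha$ pointwise nilpotent of rank at most one, hence locally expressible as
\[
\alpha \;=\; \begin{pmatrix} -g \\ 1 \end{pmatrix}\bigl(1,\ -g\bigr)\,\omega \;=\; \begin{pmatrix} -g & g^{2} \\ 1 & -g \end{pmatrix}\omega
\]
for a meromorphic function $g$ and a holomorphic $1$-form $\omega$, the so-called Bryant data. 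Differentiating $\phi = FF^{*}$ yields $d\phi = F(\alpha+\alpha^{*})F^{*}$, and because $PSL(2,\mathbb{C})$ acts by isometries on $\mathbb{H}^{3}$ via \eqref{HS3}, every geometric quantity of $\phi$ at a point equals the corresponding quantity of the Hermitian matrix $\alpha+\alpha^{*}$ at the base point transported by $F$. A direct computation in this normal form gives the induced metric $ds^{2}_{\phi} = (1+|g|^{2})^{2}|\omega|^{2}$ together with a unit normal from which the second fundamental form is read off; the mean curvature then comes out identically equal to $1$.

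For the converse, let $\phi:M\to\mathbb{H}^{3}$ be a CMC-1 immersion and pass to the universal cover $\widetilde{M}$. I would build an $\mathfrak{sl}(2,\mathbb{C})$-valued $1$-form $\alpha$ on $\widetilde{M}$ out of the conformal parameter, the Hopf differential and the secondary Gauss map of $\phi$, modeled on the nilpotent template above. The decisive step is checking the Maurer--Cartan integrability condition $d\alpha + \alpha\wedge\alpha = 0$, which translates term-by-term into the Gauss and Codazzi equations of a CMC-1 surface in $\mathbb{H}^{3}$ (the Gauss equation uses crucially that the ambient sectional curvature is $-1$ and cancels the contribution of $H^{2}=1$). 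Once integrability holds, simple connectedness of $\widetilde{M}$ together with the Frobenius theorem produces $\widetilde{F}:\widetilde{M}\to SL(2,\mathbb{C})$ with $\widetilde{F}^{-1}d\widetilde{F} = \alpha$, unique up to a left constant. Fixing that constant so that $\widetilde{F}\widetilde{F}^{*}$ matches $\phi$ to first order at a basepoint, the forward direction already proved shows that both maps are CMC-1 immersions in $\mathbb{H}^{3}$ with matching first and second fundamental forms there, and the fundamental theorem of surface theory in $\mathbb{H}^{3}$ forces $\phi = \widetilde{F}\widetilde{F}^{*}$ globally on $\widetilde{M}$.

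The main obstacle is the Maurer--Cartan verification in the converse: one has to identify the Hopf differential and the Gauss map with the entries of the nilpotent template so that flatness of $\alpha$ reproduces Gauss--Codazzi exactly. This is a bookkeeping calculation tied to the conventions of the identification \eqref{HS2} between $\mathbb{L}^{4}$ and $\operatorname{Herm}(2)$ and of the isometric action \eqref{HS3}. Once those conventions are fixed, the remainder of the argument reduces to formal manipulations of $2\times 2$ matrices.
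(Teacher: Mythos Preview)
The paper does not prove this theorem. Theorem~\ref{thBriantRep} is stated as a classical result attributed to Bryant and referenced to \cite{kn:B}; the paper only records, in the remarks following the statement, the local nilpotent form \eqref{BR1} of $F^{-1}dF$ and the resulting expressions for $ds^{2}$ and $\kappa$, without supplying any argument for either direction of the representation theorem. So there is no ``paper's own proof'' to compare against.

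That said, your sketch is the standard route and is essentially sound. A couple of minor points. Your nilpotent template $\begin{pmatrix} -g & g^{2} \\ 1 & -g \end{pmatrix}\omega$ differs in sign convention from the paper's \eqref{BR1}, which uses $\begin{pmatrix} g & -g^{2} \\ 1 & -g \end{pmatrix}\omega$; this is harmless (replace $g$ by $-g$) but you should be consistent with the paper's choice so that later formulas for $G$ and $g$ match. In the converse, the cleanest way to pin down the left constant in $\widetilde{F}$ is to build an adapted frame $e_{0}=\phi,\,e_{1},\,e_{2},\,e_{3}=\nu$ along the immersion, package it as a map into $SL(2,\mathbb{C})$ via the identification \eqref{HS2}, and then extract the holomorphic null factor directly; this avoids appealing to the fundamental theorem of surface theory after the fact and gives $\phi=\widetilde{F}\widetilde{F}^{*}$ by construction rather than by a rigidity argument. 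Your Maurer--Cartan/Gauss--Codazzi identification is correct in spirit; the only genuine work is the bookkeeping you flag, and that is indeed where Bryant's original proof spends its effort.
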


Because of this work, the CMC-1 surfaces in hyperbolic 3-space became known as 
\textit{Bryant Surfaces}.

\begin{rem}
A holomorphic map $F: M \longrightarrow SL(2, \mathbb{C})$, such that 
$det(F^{-1}dF) = 0$ is called \textit{holomorphic null immersion}.
\end{rem}

\begin{rem}
Let $F: M \longrightarrow SL(2, \mathbb{C})$ be a map such that  $det(F^{-1}dF) 
= 0$. Locally, we can write:

  \begin{equation}\label{BR1}
  F^{-1}dF =  \left[\begin{array}{c c }
                     g & -g^{2}\\                                           
                     1 & -g
                     \end{array}\right] \omega
 \end{equation}
 
Where $g$ is a meromorphic function and $\omega$ is a holomorphic 1-form. 
Indeed, write
 
   \begin{equation}\label{BR2}
  F(z) =  \left[\begin{array}{c c }
                     A(z) & B(z)\\                                           
                     C(z) & D(z)
                     \end{array}\right] \in SL(2, \mathbb{C})
 \end{equation}
 
 Where $z$ is conformal local coordinate and $A, B, C, D$ are holomorphic. So, 
take $$g = - \frac{dB}{dA}, \ \omega = AdC - CdA$$
  
\end{rem}

 As $F$ is holomorphic, the forms $\omega$ e $g^{2}\omega$ are also 
holomorphic, and  $F^{-1}dF$ never vanish, by the fact that $F$ is a immersion. 
Therefore poles of $g$ are zeros of $\omega$, and a pole of order $k$ of $g$is a 
zero of order $2k$ of $\omega$. The pair $(g, \omega)$ is the Bryant data 
associated to $F$.
 
\begin{rem}
 We can write the holomorphic 1-form $\omega$, in local coordinates, as $\omega 
= f(z)dz$. So, we can write the Bryant data as $(g, f)$.
\end{rem} 
 
 Let $F$ be  a immersion of $M$ in $SL(2, \mathbb{C})$ such that 
 
\begin{equation}\label{BR3}
 F^{-1}dF =  \left[\begin{array}{c c }
                     g & -g^{2}\\                                           
                     1 & g
                     \end{array}\right] \omega 
\end{equation}

By Bryant's representation theorem and Lawson's correspondence, if $(g, f)$ is 
a Bryant data of a Bryant surface
so the metric and Gaussian curvature can be described using $g$ and $F$ by. 

$$ds^{2} = (1 + |g|^{2})^{2}|f|^{2}dz^{2}$$

$$\kappa(z) = \frac{-4|g'|^2}{|f|^2(1+|g|^2)^4} $$

Note that these are the same expressions for the Weierstrass data and minimal 
surfaces. 

\begin{rem}
 Note That the Weierstrass representation can be obtained as the limit of 
Bryant's theorem by collapsing the Lie group $SL(2, \mathbb{C})$ in the 
abelian group $\mathbb{C}^{3}$. 
\end{rem}

\subsection{Hyperbolic Gauss Map}

In 1986, Epstein introduced the hyperbolic Gauss map while he studied immersed 
surfaces in the hyperbolic 3-space in Poincaré model. Later, Bryant \cite{kn:B} 
reintroduced the concept in the hyperboloid of Minkowski space model, in the 
following way:

\begin{defn}\label{HGMdef}
 Let $M$ be a immersed surface in the hyperbolic 3-space. Define the 
\textit{hyperbolic Gauss Map} 
 $G:M \longrightarrow \mathbb{S}_{\infty}^{2}$ in the following way:
Let $z \in M$, take $\gamma$ normal oriented geodesic starting from $z$. Define 
$G(z)$ as the point where $\gamma$
intercepts the ideal boundary $\mathbb{S}_{\infty}^{2}$ of $\mathbb{H}^{3}$. In 
other words $$G(z) = \lim\limits_{t \rightarrow \infty} \gamma (t)$$
\end{defn}

For Bryant surfaces we have the follow characterization. Let $M$ be a Bryant 
surface and $F:M \longrightarrow SL(2, \mathbb{C})$ the conformal immersion 
given by theorem $2.1$. Denote 

\begin{eqnarray}\label{HGM1}
 F(z) = \left[\begin{array}{c c}
                     A(z) & B(z)\\                                           
                     B(z) & D(z)
                     \end{array}\right]
\end{eqnarray}

then,

\begin{eqnarray}
 G(z) = \frac{dA}{dC} = \frac{dB}{dD}\label{HGM2}
\end{eqnarray}

Following the terminology introduced by Umehara and Yamada,  $g(z) =  - 
\frac{dB}{dA}$ is called \textit{Secondary Gauss Map}. Note that the secondary 
Gauss map is the map $g$ from Bryant data $(g, \omega)$.
the motivation for the name comes from the fact that in a minimal surface 
immersed in $\mathbb{R}^{3}$ with  Weierstrass data $(\tilde{g}, \tilde{f})$ 
the map $\tilde{g}$ can be seen as the composition of Gauss map with 
stereographic projection. So, the map $\tilde{g}$ has 2 fundamentals roles: 

\begin{enumerate}
 \item[(i)] Describe the metric. 
 \item[(ii)] Describe the stereographic projection of unit normal vector. 
\end{enumerate}

But in the case of Bryant data $(g, f)$, the roles $(i)$ e $(ii)$ are played by 
2 different maps, $g$ plays the role $(i)$ and $G$ plays the role $(ii)$. 
Therefore, we can think that the Bryant representation has 2 Gauss Maps, the 
Hyperbolic Gauss Map and the Secondary Gauss Map.

Umehara e Yamada showed the following relationship between the Hyperbolic 
Gauss Map and Secondary Gauss Map.

\begin{eqnarray*}
 S(g) - S(G) = 2Q
\end{eqnarray*}

where

\begin{eqnarray*}
 S(g) &=& S_{z}(g)dz \\
 S_{z}(g) &=& \left( \frac{g''}{g'} \right) ' - \frac{1}{2} \left( 
\frac{g''}{g'} \right) ^{2} \\
 Q &=& \omega dg
\end{eqnarray*}

$S_{z}(g)$ is the \textit{Schwarzian derivate} and $Q$ is the \textit{Hopf's 
differential}.

\subsection{Finite Total Curvature}

\ The basic tools in the modern theory of minimal immersions $\phi: M^{2} 
\longrightarrow \mathbb{R}^{3}$ was establish by Osserman \cite{kn:O}. In this 
paper he shows  the following  results about complete minimal surfaces $M$ of 
$\mathbb{R}^3$ with finite total curvature:
\begin{enumerate}
 \item There is a compact Riemann surface $\overline{M}$ of finite genus $\mu$ 
and  a finite set $E=\{p_1,\cdots,p_k\}\subset\overline{M}$ such that $M$ is 
conformal to $\overline{M}\setminus E.$
\item The Gauss map $G\colon M\to\mathbb{S}^2$ extends to a conformal branched 
covering map $G\colon \overline{M}\to\mathbb{S}^2$.
\item $\sharp\big(\mathbb{S}^2\setminus G(M)\big)\leq 3$ unless $M$ is a flat 
plane.
\end{enumerate}
The points $p_{i} \in E$ or some times one neighborhood of them are known as 
\textit{ends of $M$.} In the hyperbolic context those items have similar 
results with fill exception. 

\begin{thm}[Bryant, \cite{kn:B}]\label{thBryant}
 Let $M$ a complete Bryant surface with finite total curvature immersed in 
$\mathbb{H}^{3}$. Then there is a compact Riemann surface $\overline{M}$ and a 
finite set $E = \{p_{1}, \ ... \ , p_{n} \} \subset \overline{M}$ such that $M$ 
is conformal to 
 $\overline{M} \setminus E$.\end{thm}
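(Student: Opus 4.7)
The plan is to reduce the statement to Huber's classical theorem on complete surfaces of finite total absolute curvature. The key observation from Bryant's representation, already stated in the excerpt, is that the induced metric on $M$ is
\[
ds^{2} = (1+|g|^{2})^{2}|f|^{2}|dz|^{2},
\]
with Gaussian curvature
\[
\kappa(z) = \frac{-4|g'|^{2}}{|f|^{2}(1+|g|^{2})^{4}} \le 0.
\]
Because the intrinsic metric has exactly the same Weierstrass-type form as a minimal surface in $\R^{3}$, $M$ is intrinsically a complete Riemannian surface of non-positive curvature.

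Next I would translate the hypothesis \emph{finite total curvature} into the language of Huber's theorem. Since $\kappa\le 0$, the total absolute curvature equals $\int_{M}(-\kappa)\,dA$, which is finite by assumption; thus $\int_{M}|\kappa|\,dA<\infty$. Combined with completeness, this is exactly Huber's hypothesis.

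Now invoke Huber's theorem: a complete Riemannian $2$-manifold with finite total absolute curvature is conformally equivalent to a closed Riemann surface $\overline{M}$ with a finite number of points $E=\{p_{1},\ldots,p_{n}\}$ removed. Applying it to $M$ gives the stated biholomorphism $M\simeq \overline{M}\setminus E$, and the points of $E$ are precisely the ends.

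The genuine content of the argument lies entirely in Huber's theorem; the only part specific to the Bryant setting is identifying the intrinsic geometry via the Bryant data so that the classical hypotheses apply. The main obstacle, if one wished to avoid a black-box citation, would be reproducing Huber's proof (an isoperimetric/area-growth argument controlling the conformal structure of each end), but in this paper it is legitimate to cite it. One should also note that the finiteness of $\sharp E$ is automatic from Huber, while identifying the genus of $\overline{M}$ with the intrinsic genus of $M$ follows because the conformal equivalence is a diffeomorphism away from the punctures.
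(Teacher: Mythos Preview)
Your argument is correct and is exactly the standard one: since the induced metric of a Bryant surface has the Weierstrass form with $\kappa\le 0$, finite total curvature is finite total absolute curvature, and Huber's theorem immediately yields the conformal compactification. Note, however, that the paper does not supply its own proof of this statement; it is quoted as a result of Bryant \cite{kn:B} (who indeed proves it via the isometry with the cousin minimal surface and then Huber), so there is no in-paper argument to compare against beyond the fact that your approach coincides with the original one.
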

\begin{thm}[Collin, Hauswirth, Rosenberg, \cite{kn:C-H-R-1}]\label{thCHR1}
 Let $M$ be a immersed complete Bryant surface with finite total curvature in 
$\mathbb{H}^{3}$. Then the hyperbolic Gauss map $G$ of $M$ omit at most 
$3$ points unless $G$ is constant and $M$ is a horosphere.
\end{thm}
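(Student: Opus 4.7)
The plan is to adapt Osserman's classical bound for minimal surfaces of finite total curvature to the Bryant setting. The starting point is Theorem \ref{thBryant}, which identifies $M$ conformally with a compact Riemann surface $\overline{M}$ minus finitely many punctures $E=\{p_1,\ldots,p_n\}$; each $p_i$ is an end of finite total curvature, so the Bryant data $(g,\omega)$ is controlled near every puncture.

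The first and most delicate step is to show that the hyperbolic Gauss map extends meromorphically across each puncture to a holomorphic map $G\colon\overline{M}\to\sph^2$. Locally on $M$, one has $G=dA/dC$, the ratio of two holomorphic differentials coming from the Bryant lift $F$. At an end the finite-total-curvature hypothesis, via the asymptotic analysis of Bryant and of Umehara--Yamada, forces $dA$ and $dC$ to have at worst meromorphic singularities at $p_i$, so their ratio extends meromorphically. Once this extension is in hand, either $G$ is constant or it is a non-constant holomorphic map between compact Riemann surfaces, hence a branched covering of $\sph^2$, and in particular surjective, so $\sph^2\setminus G(M)\subseteq G(E)$ is automatically finite.

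The second step is the sharper bound of $3$. Combining the degree formula for Bryant surfaces, namely $\int_M K\,dA=-4\pi\deg(G)$ (an analogue of Osserman's identity, using the curvature formula $\kappa=-4|g'|^2/|f|^2(1+|g|^2)^4$ on the compactification), with Gauss--Bonnet on $\overline{M}$ and Riemann--Hurwitz for $G\colon\overline{M}\to\sph^2$, one obtains an inequality involving the genus of $\overline{M}$, the degree of $G$, the total branching, the number of ends, and the number $k$ of omitted points. If $k\geq 4$, this inequality fails, giving a contradiction. Alternatively, one may argue by pulling back the complete conformal metric of constant negative curvature on $\sph^2$ minus four points via $G$ and comparing it with the induced metric on $M$ through a Schwarz--Ahlfors estimate; the resulting bound is incompatible with the finite character of $G$ at each end.

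The final step is to identify $M$ when $G$ is constant: the relation $dA/dC\equiv\mathrm{const}$, together with the null condition $\det(F^{-1}dF)=0$, constrains the Bryant lift $F$ so rigidly that, up to an isometry of $\mathbb{H}^3$, the only complete CMC-$1$ immersion realizing it is a horosphere. The principal obstacle is precisely the meromorphic extension of $G$ across each end: unlike in the minimal surface setting, the lift $F$ is defined only on the universal cover of $M$ and its individual entries need not extend, so one must argue directly with the quotient $dA/dC$ together with Bryant's normal form for regular ends of finite total curvature.
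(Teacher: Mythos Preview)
Your first step contains a genuine gap. You assert that finite total curvature forces the hyperbolic Gauss map $G$ to extend meromorphically across every puncture $p_i$, via control on $dA$ and $dC$. This is false for Bryant surfaces, and the paper flags exactly this point: finite total curvature controls the secondary Gauss map $g$ (the Bryant data $(g,\omega)$ extends meromorphically to $\overline{M}$), but \emph{not} the hyperbolic Gauss map $G=dA/dC$. The entries of the lift $F$ are obtained by solving the ODE $F^{-1}dF=\begin{pmatrix} g & -g^2\\ 1 & -g\end{pmatrix}\omega$ around the puncture, and the resulting monodromy can make $G$ genuinely essential there. The paper calls such ends \emph{irregular} and singles this out as ``the first big difference between minimal surfaces and Bryant surfaces.'' So your extension argument does not go through, and with it the subsequent Riemann--Hurwitz/degree count collapses. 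A related error: the identity $\int_M K\,dA=-4\pi\deg(G)$ you invoke is not correct for Bryant surfaces; the ordinary total curvature computes $\deg(g)$, while it is the \emph{dual} total curvature that computes $\deg(G)$, and these can differ.

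The paper does not prove Theorem~\ref{thCHR1} itself (it is quoted from \cite{kn:C-H-R-1}), but in proving its sharper Theorem~1 it shows exactly how the irregular-end obstruction is handled: one splits into two cases. If some end $p_i$ is irregular, then $G$ has an essential singularity there and the Big Picard theorem immediately gives at most two omitted values. If every end is regular, then $G$ does extend to $\overline{M}$; in that case $M$ is automatically an algebraic Bryant surface (finite dual total curvature), and one passes to the dual data $(G,\omega^\sharp)$, whose minimal cousin in $\R^3$ is complete with finite total curvature and has $G$ as its ordinary Gauss map, so the Osserman/Jorge--Mercuri bound applies. Your outline would be repaired by inserting this dichotomy at the outset and, in the regular case, replacing the incorrect curvature--degree identity by the dual one.
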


 The hyperbolic Gauss map is holomorphic but does not necessarily has 
meromorphic extension to the ends. In fact some ends could be an essential 
singularity of the Gauss map and motivated the next definition. 

\begin{defn}
Let $M$ be a complete Bryant surface with finite total curvature and $E = 
\{p_{1}, \ ... \ , p_{n} \}$ the ends of $M$. We say that an end $p_{i}$ is 
\textit{regular} if the hyperbolic Gauss map does extend meromorphically to 
$p_{i}$. Otherwise the end $p_{i}$  is called \textit{irregular}.
\end{defn}

The existence of irregular ends is the first big difference between minimal 
surfaces and Bryant surfaces.

\subsection{Dual Surfaces}

\ \ \ It is well known that a minimal surface $M$ with finite total 
curvature immersed in $\mathbb{R}^{3}$ satisfies the Osserman's inequality

\begin{eqnarray}
 \frac{1}{2 \pi} \int _{M} KdA \leq (\chi (M) - n)\label{SD1}
\end{eqnarray}

where $K$ is the Gaussian curvature of $M$ and $n$ is the number of ends of the 
surface. For Bryant surfaces with finite total curvature, there is no analogue  
relation. In fact, Umehara and Yamada \cite{kn:U-Y-2} showed that Bryant surfaces satisfy 
only the Cohn-Vossen inequality

\begin{eqnarray}\label{SD2}
  \frac{1}{2 \pi} \int _{M} KdA < \chi (M)
\end{eqnarray}

\ \ \ Motivated by this fact, Umehara and Yamada \cite{kn:U-Y-2} introduced the concept of 
dual surface, in a way that if $\phi : M^{2} \longrightarrow \mathbb{H}^{3}$ is 
a CMC-1 immersion and $\phi^{\sharp} : \tilde{M}^{2} \longrightarrow 
\mathbb{H}^{3}$ is the dual immersion, then $M$ satisfies an analogue of 
Osserman's inequality, but in terms of dual surface. 

\ \ \ Let $\phi: M \longrightarrow \mathbb{H}^{3}$ be a complete CMC-1 
immersion with finite total curvature. Let $G$ be the hyperbolic Gauss map, 
$(g,\omega)$ its Bryant data and $Q = \omega dg$ the Hopf differential of 
$\phi$. 

\begin{defn}\label{SDdef}
 Define the dual CMC-1 immersion $\phi^{\sharp}: \tilde{M} \longrightarrow 
\mathbb{H}^{3}$ associated to Bryant data $(g, \omega)$ of $\phi$ by
 
 \begin{eqnarray*}
\phi^{\sharp} = (F^{\sharp})(F^{\sharp})^{*}  
 \end{eqnarray*}

where $F$ satisfies $(4)$, $F^{\sharp}$ is the inverse of the matrix $F$ and 
$\tilde{M}$ is the universal cover of $M$.
\end{defn}

\begin{rem}
Observe that $\phi^{\sharp}$ is not necessarily \textit{single-valued} in $M$. 
But is easy see that $\phi^{\sharp}$ is \textit{single-valued} in $M$ if, and 
only if, $g$ is \textit{single-valued} in $M$.
\end{rem}

Let $(g^{\sharp}, \omega^{\sharp})$ be the pair defined by

\begin{equation}\label{SD3}
  (F^{\sharp})^{-1}dF^{\sharp} =  \left[\begin{array}{c c }
                     g^{\sharp} & -(g^{\sharp})^{2}\\                           
                
                     1 & -g^{\sharp}
                     \end{array}\right] \omega^{\sharp}
 \end{equation}

thus, taking $(\phi^{\sharp})^{\sharp}$, we get a map 
$(F^{\sharp})^{\sharp}: \tilde{\tilde{M}} \longrightarrow \mathbb{H}^{3}$ such 
that, $(F^{\sharp})^{\sharp} = (F^{-1})^{-1} = F$, this shows that 
$(\phi^{\sharp})^{\sharp} = \phi$.

Now we show a important result due to Umehara and Yamada \cite{kn:U-Y-2} that shows the 
relationship between the Bryant data and its dual data.

\begin{prop}[Umehara, Yamada]\label{thUY}
 Let $\phi^{\sharp}$ be the dual immersion of the CMC-1 immersion $\phi$ of a 
surface $M$ in a hyperbolic 3-space. Let $(g, \omega)$ be the Bryant data of 
$\phi$. So, $\phi^{\sharp}$ is a CMC-1 immersion of $M$ in the hyperbolic 
3-space, with hyperbolic Gauss map $G^{\sharp}$, Bryant data $(g^{\sharp}, 
\omega^{\sharp})$ and Hopf differential $Q^{\sharp}$  $\phi^{\sharp}$ given 
by:
 
 \begin{eqnarray}
  G^{\sharp} = g \ , \ g^{\sharp} = G \ , \ \omega^{\sharp} = - \frac{Q}{dG} \ 
, \ Q^{\sharp} = - Q
 \end{eqnarray}

\end{prop}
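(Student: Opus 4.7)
The plan is to prove the proposition by a direct matrix computation starting from the definition $F^{\sharp} = F^{-1}$ and then reading off the Bryant data of the dual from the Maurer--Cartan form of $F^{\sharp}$.

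First I would compute the basic identity
\[
(F^{\sharp})^{-1}dF^{\sharp} \;=\; F\, d(F^{-1}) \;=\; -\,(dF)F^{-1} \;=\; -F\bigl(F^{-1}dF\bigr)F^{-1}.
\]
Next I would exploit the rank-one factorization of the Bryant form,
\[
F^{-1}dF \;=\; \begin{pmatrix} g \\ 1 \end{pmatrix}\begin{pmatrix} 1 & -g \end{pmatrix}\omega,
\]
which turns the conjugation by $F$ into a matter of computing the two vectors
\[
F\begin{pmatrix} g \\ 1 \end{pmatrix} \;=\; \begin{pmatrix} Ag+B \\ Cg+D \end{pmatrix} \;=:\; \begin{pmatrix} q \\ p \end{pmatrix},
\qquad
\begin{pmatrix} 1 & -g \end{pmatrix} F^{-1} \;=\; \begin{pmatrix} p & -q \end{pmatrix}.
\]

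The main algebraic step would be the identity $q = G\,p$, i.e.\ $Ag + B = G(Cg + D)$. I would derive this by differentiating $AD - BC = 1$ and substituting the relations $dA = G\,dC$, $dB = -gG\,dC$, $dD = -g\,dC$, which follow from the definitions $G = dA/dC = dB/dD$ and $g = -dB/dA$. Once $q = Gp$ is in hand, the matrix product collapses to
\[
(dF)F^{-1} \;=\; \begin{pmatrix} G & -G^{2} \\ 1 & -G \end{pmatrix} p^{2}\omega,
\]
so that $(F^{\sharp})^{-1}dF^{\sharp}$ is already in the standard Bryant form with the reading-off $g^{\sharp} = G$ and $\omega^{\sharp} = -p^{2}\omega$. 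In particular this shows that $\phi^{\sharp}$ is a null immersion, hence CMC-$1$. The complementary identity $G^{\sharp} = g$ I would get from $F^{\sharp} = \begin{pmatrix} D & -B \\ -C & A \end{pmatrix}$ together with the formula $G^{\sharp} = dA^{\sharp}/dC^{\sharp} = -dD/dC$, which equals $g$ by $dD = -g\,dC$.

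The cleanest point is to reconcile $\omega^{\sharp} = -p^{2}\omega$ with the Umehara--Yamada form $-Q/dG$. For that I would establish two sub-identities: (a) $(Cg + D)(A - CG) = 1$, which follows by expanding and using $AD - BC = 1$ together with $Ag+B = G(Cg+D)$; and (b) $g'/G' = p/h$ with $h = A - CG$, obtained from $p' = Cg'$ and the chain-rule computation $G' = hg'/p$. Combining (a) and (b) gives $p^{2} = p/h = g'/G'$, so $-p^{2}\omega = -\omega\, dg/dG = -Q/dG$, as required. Finally $Q^{\sharp} = \omega^{\sharp}\,dg^{\sharp} = -p^{2}\omega\, dG$ and $dG = G'dz = (hg'/p)\,dz$, which simplifies using $ph = 1$ to $-\omega\, dg = -Q$.

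The only genuinely subtle point is the bookkeeping around the key identity $(Cg+D)(A-CG) = 1$: without it the two formulas $\omega^{\sharp} = -p^{2}\omega$ and $\omega^{\sharp} = -Q/dG$ look completely different. Everything else is mechanical matrix algebra, and the CMC-$1$ conclusion is automatic because the dual Maurer--Cartan form is again a rank-one null form of Bryant type.
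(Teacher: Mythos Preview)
Your argument is correct. The paper does not actually prove this proposition: it is quoted from Umehara and Yamada \cite{kn:U-Y-2} without proof, so there is no ``paper's own proof'' to compare against. Your approach is the standard one and all the steps check out. A few remarks that might streamline your write-up:

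\begin{itemize}
\item The identity $q = Gp$ can be read off instantly without differentiating $AD-BC=1$. From $dF = F\begin{pmatrix} g & -g^2 \\ 1 & -g\end{pmatrix}\omega$ one gets $dA = (Ag+B)\omega = q\omega$ and $dC = (Cg+D)\omega = p\omega$, so $G = dA/dC = q/p$ directly.
\item Your computations of $p' = Cg'$ and $q' = Ag'$ (hence $G' = g'(A-CG)/p = hg'/p$) follow cleanly from those same formulas $A' = qf$, $B' = -gqf$, $C' = pf$, $D' = -gpf$ with $\omega = f\,dz$; you may want to record these once at the start so the later steps are transparent.
\item The key scalar identity $(Cg+D)(A-CG)=1$ is indeed the crux; your derivation of it from $AD-BC=1$ and $Ag+B=G(Cg+D)$ is correct, and once you have $ph=1$ the equalities $\omega^{\sharp}=-p^{2}\omega=-Q/dG$ and $Q^{\sharp}=-Q$ drop out exactly as you wrote.
\end{itemize}

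No gaps; the argument is complete as sketched.
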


\begin{rem}
 Observe that the dual surface changes the hyperbolic Gauss map with the 
secondary Gauss map of $M$.
\end{rem}

If $\phi$ is a CMC-1 immersion in hyperbolic 3-space and $\phi^{\sharp}$ its 
dual immersion. By the above theorem, $\phi^{\sharp}$ is a CMC-1 immersion 
also, so it admits a Bryant data $(g^{\sharp}, \omega^{\sharp})$. Thus the 
metric of the dual immersion is given by
 
\begin{eqnarray}
 ds^{\sharp 2} &=& (1 + |g^{\sharp}|^{2})^{2}\omega^{\sharp} 
\overline{\omega^{\sharp}} \\
               &=& (1 + |G|^{2})^{2}\frac{Q}{dG} \overline{(\frac{Q}{dG})}  
\end{eqnarray}

Such metric is called \textit{dual metric}.

\begin{rem}
 Observe that the metric $ds^{\sharp 2}$ is well defined and \textit{single 
valued} in $M$. So, make sense take $M$ with the metric $ds^{\sharp 2}$, such 
surface is a Bryant surface with Bryant data $(G, \frac{-Q}{dG})$.
\end{rem}

There is a relation between the metric $ds^{2}$ and the dual metric $ds^{\sharp 
2}$ \cite{kn:Z}

\begin{prop}[Yu]\label{thYu}
The dual metric $ds^{\sharp 2}$ is complete (resp. non degenerated) if, and 
only if, the metric $ds^{2}$ is complete (resp. non degenerated).
\end{prop}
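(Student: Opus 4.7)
The plan is to reduce both assertions to comparisons extracted from the explicit Bryant-data formulas for the two metrics. Combining
\[
ds^{2}=(1+|g|^{2})^{2}|\omega|^{2}, \qquad ds^{\sharp 2}=(1+|G|^{2})^{2}\frac{|Q|^{2}}{|dG|^{2}}
\]
with $|Q|^{2}=|\omega|^{2}|dg|^{2}$ yields the identity
\[
ds\cdot \sigma_{g}\;=\;ds^{\sharp}\cdot \sigma_{G}\;=\;|Q|,
\]
where $\sigma_{g}=|dg|/(1+|g|^{2})$ and $\sigma_{G}=|dG|/(1+|G|^{2})$ are the pullbacks by $g$ and $G$ of the Fubini--Study metric on the Riemann sphere. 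The involutivity $(\phi^{\sharp})^{\sharp}=\phi$ of the duality reduces each of the two equivalences to a single implication.

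For non-degeneracy I would argue at the level of the holomorphic null lift $F$. The immersion $\phi=FF^{*}$ is non-degenerate iff the matrix-valued $1$-form $F^{-1}dF$ is nowhere zero. Since $F^{\sharp}=F^{-1}$,
\[
(F^{\sharp})^{-1}\,dF^{\sharp}\;=\;F\,d(F^{-1})\;=\;-F\,(F^{-1}dF)\,F^{-1},
\]
which is conjugate (via the invertible matrix $F$) to $-F^{-1}dF$, hence vanishes at exactly the same points. This gives the non-degeneracy equivalence directly.

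For completeness I would argue by contrapositive. Suppose $ds$ is complete while $ds^{\sharp}$ is not, and pick a divergent path $\gamma\colon[0,1)\to M$ of finite $ds^{\sharp}$-length. The displayed identity rewrites
\[
L_{ds}(\gamma)=\int_{\gamma}\frac{|Q|}{\sigma_{g}}=\infty, \qquad L_{ds^{\sharp}}(\gamma)=\int_{\gamma}\frac{|Q|}{\sigma_{G}}<\infty,
\]
so along the tail of $\gamma$ the density $\sigma_{g}$ must become, in an integrable sense, arbitrarily smaller than $\sigma_{G}$. The main obstacle is to rule out this asymptotic behaviour. I expect to attack it via the curvature identities $\sigma_{g}/ds=\sqrt{|\kappa|}/2$ (read off from the formula for $\kappa$ given in the Bryant representation subsection) and, symmetrically for the dual surface, $\sigma_{G}/ds^{\sharp}=\sqrt{|\kappa^{\sharp}|}/2$, which recast the comparison of $\sigma_{g}$ and $\sigma_{G}$ as a comparison of the intrinsic Gaussian curvatures of the two metrics. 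Combining this with the Schwarzian identity $S(g)-S(G)=2Q$ of Umehara--Yamada, which encodes the precise discrepancy between the roles of $g$ and $G$ on a Bryant surface, should yield a two-sided comparison $c_{1}\,ds\le ds^{\sharp}\le c_{2}\,ds$ on a neighbourhood of the tail of $\gamma$, contradicting the assumption and completing the proof.
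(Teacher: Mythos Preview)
First, note that the paper does not supply its own proof of this proposition: it is quoted as a result of Yu \cite{kn:Z} and used as a black box. So there is no in-paper argument to compare your proposal against; it has to stand on its own.

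Your non-degeneracy argument is clean and correct. The conjugation identity $(F^{\sharp})^{-1}dF^{\sharp}=-F\,(F^{-1}dF)\,F^{-1}$ shows at once that $F^{-1}dF$ and $(F^{\sharp})^{-1}dF^{\sharp}$ vanish at exactly the same points, which is the non-degeneracy equivalence.

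The completeness argument, however, has a genuine gap, and it is precisely the one you label ``the main obstacle.'' Your identity $ds\cdot\sigma_{g}=ds^{\sharp}\cdot\sigma_{G}=|Q|$ is correct, and combining it with $\sigma_{g}=\tfrac{1}{2}\sqrt{|\kappa|}\,ds$ and $\sigma_{G}=\tfrac{1}{2}\sqrt{|\kappa^{\sharp}|}\,ds^{\sharp}$ gives $(ds^{\sharp}/ds)^{2}=\sqrt{|\kappa|/|\kappa^{\sharp}|}$. But nothing you invoke bounds this ratio. The Schwarzian relation $S(g)-S(G)=2Q$ is a second-order holomorphic identity; since the Schwarzian is invariant under M\"obius postcomposition, it is blind to rescalings of $g'$ and $G'$ and therefore cannot control the first-order quantity $\sigma_{g}/\sigma_{G}$. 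Likewise, the zeros of $\kappa$ and $\kappa^{\sharp}$ sit at the branch points of $g$ and of $G$ respectively, and there is no reason in general for these to coincide or for their orders to match; so $|\kappa|/|\kappa^{\sharp}|$ need not be bounded above or below on any neighbourhood of the tail of $\gamma$. The promised local comparison $c_{1}\,ds\le ds^{\sharp}\le c_{2}\,ds$ simply does not follow from the ingredients you list, and without it your contrapositive argument does not close.

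Yu's proof in \cite{kn:Z} proceeds by a different, more direct route, exploiting the explicit algebraic relation between $F$ and $F^{\sharp}=F^{-1}=\mathrm{adj}(F)$ at the level of matrix entries rather than passing through $g$, $G$ and the Schwarzian; you should consult that paper for the missing step.
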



\subsubsection{Algebraic Bryant Surfaces}

As the dual metric is well defined in $M$, we have the following definition:

\begin{defn}\label{SDdefDualTC}
Let $M$ be a Bryant surface. Define the \textit{dual total curvature} of $M$, 
as  

\begin{eqnarray}
K_{T}^{\sharp} = \int _{M} - K^{\sharp}dA^{\sharp} .\label{SD4}
\end{eqnarray}

Where $K^{\sharp}$ and $dA^{\sharp}$ are the Gaussian curvature and the area 
element of the surface $M$ with the dual metric.

\end{defn}

Observe that the dual total curvature is the area of $M$ with respect to the 
(singular) metric induced by Fubini-Study metric in $\mathbb{C}\mathbb{P}^{1}$.

\begin{defn}\label{SDdefFTC}
 Let $M$ be a Bryant surface. We say that $M$ is \textit{algebraic} if $M$ has 
finite dual total curvature.
\end{defn}

The algebraic Bryant surfaces satisfy the following theorem

\begin{thm}[Bryant, Huber, Z. Yu]\label{thBHYu}
Let $M be$ a algebraic Bryant surface. So:
 
 \begin{enumerate}
  \item[(i)] $M$ is biholomorphic to $\overline{M_{\gamma}} \setminus E$, where 
$\overline{M_{\gamma}}$ is a  closed surface of genus $\gamma$ and  $E \subset 
\overline{M_{\gamma}}$ is a finite set $E = \{p_{1} , ... , p_{m} \}$.
  \item[(ii)] the dual Bryant data $(G, \omega^{\sharp})$ extends 
meromorphically to $\overline{M_{\gamma}}$.
 \end{enumerate}

The points of set $E$ are the \textit{ends} of $M$. 
\end{thm}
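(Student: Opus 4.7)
The plan is to reduce the statement to the structure theory for complete Bryant surfaces with finite total curvature (Theorem~\ref{thBryant}) by passing to the dual surface. The point is that $M$ being algebraic says precisely that $(M, ds^{\sharp 2})$ has finite (absolute) total curvature, and by Proposition~\ref{thYu} the dual metric $ds^{\sharp 2}$ is complete because $ds^2$ is complete by hypothesis.

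For (i), the dual metric is single-valued on $M$, and by the remark following Proposition~\ref{thUY} it realizes $M$ as a Bryant surface with Bryant data $(G, \omega^\sharp) = (G, -Q/dG)$. Since $(M, ds^{\sharp 2})$ is therefore a complete Bryant surface with finite total curvature, I would apply Theorem~\ref{thBryant} to it, producing a closed Riemann surface $\overline{M_\gamma}$ of some genus $\gamma$ and a finite puncture set $E$ with $M$ biholomorphic to $\overline{M_\gamma}\setminus E$. Since $ds^2$ and $ds^{\sharp 2}$ are conformal to the same underlying complex structure on $M$, this is the biholomorphism asserted in (i).

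For (ii), the dual Bryant data is by construction the Bryant data of the CMC-1 surface $(M, ds^{\sharp 2})$. I would fix an end $p_j \in E$, choose a conformal chart identifying a punctured neighborhood of $p_j$ with a punctured disk $D^*$, and write $\omega^\sharp = f^\sharp(z)\,dz$, so that
\begin{equation*}
ds^{\sharp 2} = (1 + |G|^2)^2\, |f^\sharp(z)|^2\, |dz|^2 .
\end{equation*}
Completeness of $ds^{\sharp 2}$ at $z=0$ together with $\int_{D^*} |K^\sharp|\,dA^\sharp < \infty$ give sharp control on the growth of this conformal factor as $z \to 0$, and the goal is to deduce from those bounds that $G$ extends meromorphically across $z=0$ and that $f^\sharp$ has at worst a pole there.

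The main obstacle is precisely the exclusion of essential singularities of $G$: the remark on irregular ends in \S2.3 warns that for general Bryant surfaces of finite total curvature the hyperbolic Gauss map need not extend meromorphically. The virtue of the \emph{dual} total curvature being finite is that it is exactly the integrability of $G^{\ast}\omega_{FS}$, the pullback of the Fubini--Study area on $\mathbb{CP}^1$, which is strictly stronger than ordinary finite total curvature. I expect the detailed argument to combine Huber's theorem on the conformal type of complete surfaces of finite total curvature, the Schwarzian identity $S(g) - S(G) = 2Q$ from \S2.2 to transfer regularity between $g$ and $G$, and the Umehara--Yamada asymptotic analysis of complete CMC-1 ends, to conclude that both $G$ and $\omega^\sharp$ extend meromorphically to each puncture in $\overline{M_\gamma}$.
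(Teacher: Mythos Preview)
The paper does not supply a proof of this theorem: it is stated as a known result attributed to Bryant, Huber, and Z.~Yu and then used as input for the subsequent Osserman-type inequality and for the proof of Theorem~1. So there is nothing in the paper to compare your argument against line by line.

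On the substance of your proposal: your approach to (i) is the correct one and is exactly why Huber's name is attached to the statement. You should, however, invoke Huber's theorem directly rather than Theorem~\ref{thBryant}, since the latter is phrased for an \emph{immersed} Bryant surface and the dual immersion $\phi^\sharp$ is in general only defined on the universal cover; what is single-valued on $M$ is the dual metric, and Huber's theorem needs only a complete Riemannian metric of finite total (absolute) curvature.

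Your treatment of (ii) is where the proposal has a genuine gap. You correctly isolate the crux --- ruling out an essential singularity of $G$ at an end --- and you correctly observe that finite dual total curvature is exactly the finiteness of $\int_M G^*\omega_{FS}$. But then you do not prove anything: you write ``I expect the detailed argument to combine\ldots'' and list ingredients. Two of those ingredients are not the right ones. The Schwarzian relation $S(g)-S(G)=2Q$ transfers information between $g$ and $G$, but here you know nothing about $g$ at the end (indeed $g$ need not extend meromorphically), so this identity gives you no leverage. Likewise the Umehara--Yamada asymptotics presuppose the meromorphic extension you are trying to establish. The actual mechanism is the one you mention and then abandon: on a punctured disk, a holomorphic map to $\mathbb{CP}^1$ whose image has finite Fubini--Study area cannot have an essential singularity at the puncture (this is the classical removable-singularity theorem for maps of bounded spherical area, and is precisely Z.~Yu's contribution). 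Once $G$ is meromorphic at each $p_j$, the meromorphic extension of $\omega^\sharp$ follows from Osserman's analysis applied to the complete finite-total-curvature metric $ds^{\sharp 2}=(1+|G|^2)^2|\omega^\sharp|^2$, exactly as for minimal surfaces in $\mathbb{R}^3$.
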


Using this, Umehara and Yamada \cite{kn:U-Y-2} deduced an analogue to Osserman's 
inequality to dual surfaces. Explicitly they proved that

\begin{thm}[Umehara, Yamada]
 Let $M$ be a Riemann surface and $\phi:M \longrightarrow \mathbb{H}^{3}$ a 
CMC-1 complete conformal immersion with finite dual total curvature. Let 
$\phi^{\sharp} : {M}^{2} \longrightarrow \mathbb{H}^{3}$ be the dual immersion. 
then,

\begin{eqnarray}
 \frac{1}{2 \pi} \int _{M} K^{\sharp}dA^{\sharp} \leq (\chi (M) - n) \label{SD5}
\end{eqnarray}

Where $\tilde{K}$ and $dA^{\sharp}$ the dual Gaussian curvature and the dual 
area element, and $n$ is the number of ends of the original surface $M$. 
 
\end{thm}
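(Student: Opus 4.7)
The plan is to mirror Osserman's classical proof of his inequality for minimal surfaces in $\R^3$, replacing the Weierstrass data by the dual Bryant data $(g^\sharp,\omega^\sharp)=(G,\omega^\sharp)$ furnished by Proposition \ref{thUY} and exploiting the meromorphic extension across the ends provided by Theorem \ref{thBHYu}.

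First, I invoke Theorem \ref{thBHYu}: the hypothesis of finite dual total curvature identifies $M$ with $\overline{M_\gamma}\setminus E$, $|E|=n$, and extends $(G,\omega^\sharp)$ meromorphically to the compact Riemann surface $\overline{M_\gamma}$ of genus $\gamma$. In particular $G$ becomes a meromorphic function on $\overline{M_\gamma}$, hence a branched covering $G\colon\overline{M_\gamma}\to\sph^2$ of some finite degree $d$.

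Second, I would compute the total dual curvature in terms of $d$. Since $\phi^\sharp$ is itself a CMC-1 surface whose Bryant data is $(G,\omega^\sharp)$, plugging into the curvature and area formulas from Section 2.1 yields
\begin{equation*}
-K^\sharp\,dA^\sharp\;=\;\frac{4\,|dG|^2}{(1+|G|^2)^2}\;=\;G^{\ast}\omega_{\sph^2},
\end{equation*}
the pullback of the spherical area form. Since $G$ extends to a degree-$d$ branched cover of $\sph^2$, integration gives
\begin{equation*}
\int_M(-K^\sharp)\,dA^\sharp\;=\;d\cdot\mathrm{Area}(\sph^2)\;=\;4\pi d.
\end{equation*}

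Third, and this is the crux, I would show $d\ge\gamma+n-1$, following the Osserman/Jorge--Meeks end analysis adapted to the dual Bryant data. For each end $p_i$, completeness of the dual metric $ds^{\sharp 2}=(1+|G|^2)^2|\omega^\sharp|^2$ combined with the meromorphic extension of $\omega^\sharp$ forces $\omega^\sharp$ to have a pole of order $\mu_i\ge 1$ at $p_i$; via the duality relation $\omega^\sharp=-Q/dG$ this pins down the order of vanishing (or the pole) of $dG$ at $p_i$, and hence the ramification contribution of $p_i$ to $G$. Combining these local contributions with Riemann--Hurwitz for $G$, $\chi(\overline{M_\gamma})=2d-\sum_p(e_p-1)$, together with the fact that $\omega^\sharp$, as a meromorphic $1$-form on $\overline{M_\gamma}$, has divisor of degree $2\gamma-2$, gives the desired lower bound. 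Substituting,
\begin{equation*}
\frac{1}{2\pi}\int_M K^\sharp\,dA^\sharp \;=\; -2d \;\le\; -2(\gamma+n-1) \;=\; (2-2\gamma-n)-n \;=\;\chi(M)-n,
\end{equation*}
which is the claimed inequality.

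The main obstacle is the third step: translating completeness of the dual metric at each end into precise ramification data for $G$, then bookkeeping the global balance via Riemann--Hurwitz. The overall strategy is classical for minimal surfaces, but the Bryant setting requires using the duality identity $\omega^\sharp=-Q/dG$ of Proposition \ref{thUY} and the precise meromorphic extension of $(G,\omega^\sharp)$ across $E$ from Theorem \ref{thBHYu}, instead of an \emph{ad hoc} local model for each end.
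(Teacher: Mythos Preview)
The paper does not supply its own proof of this theorem; it is quoted with attribution to Umehara and Yamada \cite{kn:U-Y-2} as background for the main results. So there is no in-paper argument to compare against, and your outline is in fact the standard route: the dual metric has exactly the shape of a minimal-surface metric with Weierstrass data $(G,\omega^\sharp)$, it is complete by Proposition~\ref{thYu}, the data extend meromorphically by Theorem~\ref{thBHYu}, and one then runs the Osserman divisor count.

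There is, however, a genuine gap in your third step. Completeness of $ds^{\sharp 2}$ at an end $p_i$ only forces the effective pole order to be at least $1$ (in your notation, $\mu_i\ge 1$ when $G(p_i)$ is finite; in general $\mathrm{ord}_{p_i}\omega^\sharp\le 2k_i-1$ if $G$ has a pole of order $k_i$ there). Feeding this into the divisor identity $\deg(\omega^\sharp)=2\gamma-2$ yields only $d\ge \gamma+\tfrac{n}{2}-1$, hence $-2d\le 2-2\gamma-n=\chi(M)$, which is Cohn--Vossen, not the Osserman-type bound you want. To reach $d\ge \gamma+n-1$ one needs the sharper $\mu_i\ge 2$ (equivalently $\mathrm{ord}_{p_i}\omega^\sharp\le 2k_i-2$). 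For genuine minimal surfaces in $\R^3$ this improvement comes from the residue/period conditions that make the immersion single-valued. But here the dual immersion $\phi^\sharp$ is in general \emph{not} single-valued on $M$ (see the remark after Definition~\ref{SDdef}), so that argument is unavailable. The actual content of Umehara--Yamada's proof is to exploit instead the single-valuedness of the \emph{original} immersion $\phi$: the monodromy of $F$ around each end lies in $SU(2)$, and this constraint, transmitted through $Q=-\omega^\sharp\,dG$, is what rules out the borderline pole order at each $p_i$. Your sketch mentions $\omega^\sharp=-Q/dG$ but does not isolate this monodromy input, which is precisely the missing idea.
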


Due o this properties, one may ask if there is a Picard Type theorem for the 
hyperbolic Gauss map of algebraic Bryant surfaces too. Indeed, some partial 
results were obtained by Kawakami \cite{kn:K}.

\section{De Sitter 3-Space}

\ \ \ In this section we study the \textit{CMC-1 faces}. This CMC-1 faces are 
spacelike CMC-1 surfaces in de Sitter 3-space with some kind of singularities. 
Such surfaces share a many properties with Bryant surfaces, in particular, they 
have an analogue for the Bryant representation. For CMC-1 faces, is possible to 
define a hyperbolic Gauss map in a similar way we did in surfaces immersed in 
hyperbolic 3-space, so we can estimate the number of omitted points in the 
image of this map. In this work we show a sharp estimative for this number.

Lets give a brief description of \textit{de Sitter} space.

Let $\mathbb{L}^{4}$ be the Lorentz-Minkowski 4-space with Lorentz metric
 
 \begin{equation}\label{DeS1}
  \langle (x_{0}, x_{1}, x_{2}, x_{3}), (y_{0}, y_{1}, y_{2}, y_{3}) \rangle = 
-x_{0}y_{0} + x_{1}y_{1} + x_{2}y_{2} + x_{3}y_{3}
 \end{equation}
 
Define the \textit{de Sitter} 3-space $\mathbb{S}_{1}^{3}$ as

 \begin{eqnarray*}
 \mathbb{S}_{1}^{3} = \{ v \in \mathbb{L}^{4} | \langle v, v\rangle = 1\} 
 \end{eqnarray*}

with the induced metric $\mathbb{L}^{4}$. $\mathbb{S}_{1}^{3}$ is a 
3-dimensional Lorentz manifold  simply connected with constant sectional 
curvature 1.

We identify $\mathbb{L}^{4}$ with the set of hermitian matrices $2 \times 2$  
$Herm(2) = \{X^{*} = X \}$ by

 \begin{equation}\label{DeS2}
  (x_{0}, x_{1}, x_{2}, x_{3}) \mapsto \left[\begin{array}{c c }
                                            x_{0} + x_{3}&x_{1} + ix_{2}\\      
                                     
                                            x_{1} - ix_{2}&x_{0} - x_{3}
                                            \end{array}\right]
 \end{equation}

 where $i = \sqrt{-1}$.
 
With this identification, we can define
 
 \begin{equation}\label{DeS3}
  e_{0} = \left[\begin{array}{c c }
                       1 & 0 \\                                           
                       0 & 1
                \end{array}\right] \ , \ e_{1} = \left[\begin{array}{c c }
                                                        0 & 1 \\                
                           
                                                        1 & 0
                                                       \end{array}\right]  \ , 
\ e_{2} = \left[\begin{array}{c c }
                                                                                
                      0 & i \\                                           
                                                                                
                      -i & 0
                                                                                
                \end{array}\right]   \ , \ e_{3} = \left[\begin{array}{c c }
                                                                                
                                                                  1 & 0 \\

                                                                  0 & -1
                                                                                
                                                           \end{array}\right]
 \end{equation}
 
 Thus, 
 
 \begin{eqnarray*}
  \mathbb{S}_{1}^{3} & = & \{X | X = X^{*} \ , \ det(X) = -1 \} \\
                     & = & \{Fe_{3}F^{*} | F \in SL(2, \mathbb{C}) \}
 \end{eqnarray*}

with the metric $$\langle X, Y \rangle = - \frac{1}{2} tr(Xe_{2}(Y^{t})e_{2}).$$
 
Where $X^{*} = \overline{X^{t}}$.
 
\begin{rem}
Observe that with above definition $\langle X, X \rangle = - detX$.
\end{rem}
 
\subsection{CMC-1 Face}
In this section, we define CMC-1 faces, enumerate some important results and at 
last we prove a Picard type theorem for this surfaces. 
 
\begin{defn}\label{CMC1Facedef}
A immersion $f: M^{2} \longrightarrow \mathbb{S}_{1}^{3}$ of a surface $M$ is 
called \textit{spacelike} if the induced metric in $M$ is positive definite.
\end{defn}

As an analogue for the Bryant surfaces, we have the following theorem \cite{kn:A-A}:
 
\begin{thm}[Aiyama-Akutagawa]\label{thAiAk}
Let $D$ be a simply connected domain in $\mathbb{C}$ and $z_{0} \in D$ a base 
point. Let 
  
  $$g:D \longrightarrow (\mathbb{C} \cup \{\infty \}) \setminus \{z \in 
\mathbb{C} | |z| \leq 1 \} $$
 
be a meromorphic function and $\omega$ a holomorphic 1-form in $D$ such that
 
 $$d\hat{s}^{2} = (1 + |g|^{2})^{2}\omega \overline{\omega}$$
 
is a Riemannian metric in $D$.
 
Take $F = (F_{ij}) : D \longrightarrow SL(2, \mathbb{C})$ holomorphic immersion 
such that $F(z_{0}) = e_{0}$ and
 
\begin{equation}\label{DeS4}
  F^{-1}dF =  \left[\begin{array}{c c }
                     g & -g^{2}\\                                           
                     1 & -g
                     \end{array}\right] \omega
\end{equation}

then $f: D \longrightarrow \mathbb{S}_{1}^{3}$ defined by
$$f = Fe_{3}F^{*}$$

is a conformal spacelike immersion, with constant mean curvature 1.

The induced metric $ds^{2}$ in $D$, satisfies

$$ds^{2} = (1 - |g|^{2})^{2}\omega \overline{\omega}$$
 
Conversely, every CMC-1 immersion of a simply connected surface has this form.
 \end{thm}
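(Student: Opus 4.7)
The plan is to mirror the proof of Bryant's representation (Theorem~\ref{thBriantRep}), replacing the map $F\mapsto FF^*$ (whose image lies in $\mathbb{H}^3$) with $F\mapsto Fe_3F^*$ (whose image lies in $\mathbb{S}_1^3$). The argument splits into three steps: a matrix computation showing that $f=Fe_3F^*$ is a conformal spacelike immersion with the claimed induced metric; a second-order calculation verifying $H\equiv 1$; and an integration argument producing the converse on the simply connected $D$.

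For the forward direction, set $\alpha=F^{-1}dF$. Since $F$ is holomorphic, $dF^*=\alpha^*F^*$, and therefore
\[
df=F\bigl(\alpha e_3+e_3\alpha^*\bigr)F^*.
\]
The identity $\det F=1$ reduces the induced pseudometric to $\langle df,df\rangle=-\det(\alpha e_3+e_3\alpha^*)$. When one expands this $2\times 2$ determinant, the pure-type coefficients $\det(\alpha e_3)$ and $\det(e_3\alpha^*)$ vanish as an immediate consequence of the null condition $\det\alpha=0$, so only the mixed term survives. A short computation with the explicit nilpotent form of $\alpha$ prescribed by (\ref{DeS4}) gives
\[
ds^2=(1-|g|^2)^2\,\omega\bar\omega,
\]
which shows simultaneously that $f$ is conformal and, because $|g|>1$, that the induced metric is Riemannian; hence $f$ is a conformal spacelike immersion with the claimed metric.

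For the mean curvature I would exhibit a unit timelike normal of the form $n=FNF^*$ with $N$ a traceless Hermitian matrix, determined by orthogonality to the two columns of $\alpha e_3+e_3\alpha^*$ and normalised so that $\langle n,n\rangle=-1$. Computing $d^2f$ modulo the tangent space and pairing with $n$ produces the Hopf differential $Q=\omega\,dg$ and identifies the scalar mean curvature as $H\equiv 1$. Structurally this step is identical to the Bryant calculation, the only change being the sign in the determinant that converts $(1+|g|^2)^2$ into $(1-|g|^2)^2$.

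For the converse, given a conformal CMC-1 spacelike immersion $f:D\to\mathbb{S}_1^3$, the null holomorphic lift $F:D\to SL(2,\mathbb{C})$ is constructed by integrating the Maurer-Cartan system
\[
F^{-1}dF=\alpha,\qquad d\alpha+\alpha\wedge\alpha=0,
\]
where $\alpha$ is traceless, of type $(1,0)$, and null; these three structural conditions on $\alpha$ are exactly what conformality and CMC-1 of $f$ translate into on the pullback connection. Simple connectedness of $D$ then supplies a unique solution with $F(z_0)=e_0$, and placing the nilpotent $\alpha$ in its normal form recovers $(g,\omega)$. I expect the main obstacle to be the mean-curvature verification: the signs dictated by the Lorentzian ambient metric and by the matrix identification $X\mapsto Fe_3F^*$ must be reconciled carefully, and it is precisely the hypothesis $|g|>1$ together with the null cancellation noted above that forces $H=1$ rather than another constant.
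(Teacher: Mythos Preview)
The paper does not prove this theorem: it is quoted as the Aiyama--Akutagawa representation and attributed to \cite{kn:A-A}, with no argument supplied in the text. Consequently there is no ``paper's own proof'' against which your proposal can be compared.

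That said, your outline is the standard route and matches what one finds in the original source: compute $df=F(\alpha e_3+e_3\alpha^*)F^*$, use $\det F=1$ to reduce the first fundamental form to $-\det(\alpha e_3+e_3\alpha^*)$, observe that the pure $(2,0)$ and $(0,2)$ pieces vanish by nullness, and read off $(1-|g|^2)^2\omega\bar\omega$ from the mixed term; then build the timelike normal $n=FNF^*$ and check $H\equiv 1$; for the converse, integrate the null Maurer--Cartan form on the simply connected $D$. One small caution: the condition in the statement is that $g$ avoids the \emph{closed} unit disc, i.e.\ $|g|>1$ (not $|g|\geq 1$ or $|g|\neq 1$), and you should make explicit that this is exactly what guarantees $ds^2=(1-|g|^2)^2\omega\bar\omega$ is positive definite, so that $f$ is a genuine spacelike immersion rather than merely a CMC-1 map with singularities. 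Apart from that, your sketch is sound, though the mean-curvature step and the converse are stated at a level of detail that would need expansion in an actual write-up.
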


\begin{rem}
 The pair $(g, \omega)$ is called \textit{Aiyama data}.
\end{rem}

Following Umehara and Yamada definitions for CMC-1 immersions in 
$\mathbb{H}^{3}$, we define

\begin{defn}
Let $f:D \longrightarrow \mathbb{S}_{1}^{3}$ and $F = (F_{ij})$ as above. 
Define the \textit{hyperbolic Gauss Map} $G$ of $f$ as
 
 $$ G = \frac{dF_{11}}{dF_{21}} = \frac{dF_{21}}{dF_{22}}. $$
 
We define also the \textit{Hopf differential} $Q$ of immersion by 
 
 $$Q = \omega dg.$$
 
\end{defn}

\begin{rem}
The hyperbolic Gauss map has the following meaning. Let 
$\mathbb{S}_{\infty}^{2}$ be the pointing future ideal boundary ideal of 
$\mathbb{S}_{1}^{3}$. $\mathbb{S}_{\infty}^{2}$ can be identified with 
$\mathbb{C} \cup \{ \infty \}$. So, given $z \in D$, take $\gamma$ geodesic in 
$\mathbb{S}_{1}^{3}$ with initial velocity  as the unity normal vector of 
$f(D)$ in $f(z)$. Then $G(z)$ is the point where $\gamma$ intercepts the ideal 
boundary $\mathbb{S}_{\infty}^{2}$
\end{rem}

\begin{rem}
Observe that given $f: D \longrightarrow \mathbb{S}_{1}^{3}$ with $f = 
Fe_{3}F^{*}$, then $\hat{f}: D \longrightarrow \mathbb{H}^{3}$ given by 
$\hat{f} = FF^{*}$ is a conformal CMC-1 immersion, with metric $d\hat{s}^{2}$ 
and the same hyperbolic Gauss map $G$ and Hopf differential $Q$ of $f$.
\end{rem}

\begin{defn}
Let $M$ be a oriented surface. A smooth map $f: M \longrightarrow 
\mathbb{S}_{1}^{3}$ is called CMC-1 map if there is an open dense set $W 
\subset M$ such that $f|_{W}$ is a spacelike immersion. A point $p \in M$ is 
called a singular point of $f$ if the indexed metric $ds^{2}$ is degenerated in 
$p$.
\end{defn}

\begin{defn}
Let $f:M \longrightarrow \mathbb{S}_{1}^{3}$ be a CMC-1 map and $W \subset M$ 
an open dense set such that $f|_{W}$ is a CMC-1 immersion. A point $p \in M 
\setminus W$ is an \textit{admissible singular point} if:
 
 \begin{enumerate}
  \item[(1)] There is a map $\beta : U \cap W \longrightarrow \mathbb{R}^{+}$ 
of class $C^{1}$, where $U$ is a neighborhood of $p$, such that $\beta ds^{2}$ 
extends to a Riemannian metric in $U$ of class $C^{1}$.
  \item[(2)] $df(p) \neq 0$, that is, $df$ has rank 1 in $p$.
 \end{enumerate}

We say that a CMC-1 map $f$ is a CMC-1 face if all its singular points are 
admissible.
\end{defn}

\begin{prop}\label{DeS-prop1}
Let $M$ be a oriented surface and $f: M \longrightarrow \mathbb{S}_{1}^{3}$ 0a 
CMC-1 face where $W \subset M$ is the open dense such that $f|_{W}$ is a CMC-1 
immersion. Then there is a only one complex structure $J$ on $M$ such that
 
\begin{enumerate}
 \item[(1)] $f|_{W}$ is conformal with respect to $J$.
 \item[(2)] There is a immersion $F: \tilde{M} \longrightarrow SL(2, 
\mathbb{C})$ which is holomorphic with respect to $J$, such that 
 
 $$det(dF) = 0 \ e \ f o \varrho = Fe_{3}F^{*}$$
 
Where $\varrho: \tilde{M} \longrightarrow M$ is the universal cover of $M$.
\end{enumerate}

$F$ is called \textit{holomorphic null lift}.
 
\end{prop}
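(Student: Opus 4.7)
The plan is to first extend the natural complex structure from the regular locus $W$ to all of $M$, then construct the holomorphic null lift on $\tilde M$ by integrating an extended matrix-valued holomorphic $1$-form.

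On the open dense set $W$, the pullback Riemannian metric $ds^2$ together with the orientation of $M$ determines a unique complex structure $J_W$. To extend across an admissible singular point $p$, I use the first admissibility condition: there is a $C^1$ positive function $\beta$ on $W\cap U$ such that $\tilde{ds}^2 := \beta\, ds^2$ extends to a $C^1$ Riemannian metric on a neighborhood $U$ of $p$. Since $\tilde{ds}^2$ is conformally equivalent to $ds^2$ on $W\cap U$, the two metrics determine the same conformal class there. The existence of isothermal coordinates for $C^1$ metrics (Hartman--Wintner, or Chern's extension of Korn--Lichtenstein) yields a local complex structure on $U$ coinciding with $J_W$ on $W\cap U$. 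Patching these charts produces a complex structure $J$ on $M$. Uniqueness is immediate: any $J$ for which $f|_W$ is conformal must restrict to $J_W$ on the dense open set $W$, and therefore agrees with the constructed $J$ on $M$ by continuity.

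To construct the null lift, I first work on the open dense set $\tilde W \subset \tilde M$ that covers $W$. Theorem \ref{thAiAk} applies locally on small simply connected neighborhoods in $\tilde W$ and produces Aiyama data $(g,\omega)$ and a local holomorphic null lift $F_{\alpha}$ satisfying $f\circ\varrho = F_{\alpha} e_3 F_{\alpha}^*$. Two such lifts representing the same $f$ differ by left multiplication by an element of the stabilizer of $e_3$ in $SL(2,\mathbb C)$, a copy of $SU(1,1)$. Since $\tilde M$ is simply connected, fixing the value $F(z_0)=e_0$ at a regular base point $z_0$ eliminates the monodromy ambiguity, and the local lifts patch to a global holomorphic null immersion $F\colon \tilde W \to SL(2,\mathbb C)$.

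The main technical obstacle is extending $F$ holomorphically across the preimage in $\tilde M$ of the singular locus $\{p\in M : |g(p)|=1\}$. The key is to show that the Aiyama pair extends, with $g$ meromorphic and $\omega$ holomorphic on all of $M$: locally near an admissible singular point, the extended metric $\tilde{ds}^{\,2} = \beta(1-|g|^2)^2\omega\bar\omega$ is of class $C^1$ and nondegenerate, and condition (2) of admissibility ($df \neq 0$) precludes $g$ from having an essential singularity along the real-analytic locus $\{|g|=1\}$. Once $(g,\omega)$ is extended, the formula for $F^{-1}dF$ from Theorem \ref{thAiAk} defines a matrix-valued holomorphic $1$-form on all of $\tilde M$; integrating from $F(z_0) = e_0$ on the simply connected $\tilde M$ yields the desired global $F$. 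The relations $\det F \equiv 1$, $\det(dF) = 0$, and $f\circ\varrho = F e_3 F^*$ propagate from $\tilde W$ to $\tilde M$ by analytic continuation.
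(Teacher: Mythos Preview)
The paper does not prove this proposition; it is stated without proof as a known result due to Fujimori (reference \cite{kn:F} in the paper), so there is no ``paper's own proof'' to compare against. I will instead comment on the soundness of your argument.

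Your construction of the complex structure on $M$ via the conformal factor $\beta$ and the Korn--Lichtenstein/Chern theorem is the standard route and is fine. Likewise, patching local Aiyama--Akutagawa lifts over the simply connected $\tilde W$ is correct in spirit; one small slip is that two null lifts of the same $f$ differ by \emph{right} multiplication by a constant in $SU(1,1)$, not left (as the paper's remark following the proposition also states), but this does not affect the argument.

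The genuine gap is in your extension of $F$ (equivalently, of $(g,\omega)$) across the singular locus. That locus is where $|g|=1$, which is generically a real one--dimensional set, not a discrete set, so invoking ``essential singularity'' language is misplaced, and boundedness alone does not force holomorphic extension across an arc (think of a branch of $\sqrt{z}$ on a slit disk). Your sentence ``condition (2) of admissibility ($df\neq0$) precludes $g$ from having an essential singularity along $\{|g|=1\}$'' is not an argument: you have not explained how rank one of $df$ controls $g$ or $\omega$ near the arc, nor why the pair extends continuously so that Morera's theorem could be applied. In Fujimori's treatment the key point is that the \emph{lift metric} $d\hat s^{2}=(1+|g|^{2})^{2}\omega\bar\omega$ --- equivalently the metric of the hyperbolic companion $FF^{*}$ --- is expressible directly in terms of $f$ and hence is a genuine Riemannian metric on all of $M$, not just on $W$; this is what furnishes the complex structure and simultaneously shows that the holomorphic data extend. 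Without an argument of this type, your third paragraph does not close.
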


\begin{rem}
The holomorphic null lift is unique except of right multiplication for a 
constant matrix in $SU(1, 1)$.
\end{rem}

By the above proposition, given a CMC-1 face $f:M \longrightarrow 
\mathbb{S}_{1}^{3}$ , always exists a complex structure $J$ in $M$. 
Henceforward, $M$ will be treated as a Riemann surface with this complex 
structure.

\begin{prop}\label{DeS-prop2}
Let $M$ be a Riemann surface and $F: M \longrightarrow SL(2, \mathbb{C})$ a 
holomorphic null immersion. Assume that the symmetric $(0, 2)$-tensor 
$det[d(Fe_{3}F^{*})]$ is not identically zero. Then 
 
 $$f = Fe_{3}F^{*} : M \longrightarrow \mathbb{S}_{1}^{3}$$
 
is a CMC-1 face, and a point $p \in M$ is a singular point of $M$ if, and only 
if, $det[d(Fe_{3}F^{*})]_{p} = 0$. Beside that, $ - det[d(FF^{*})]$ is positive 
definite on $M$.
\end{prop}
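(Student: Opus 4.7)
The plan is to reduce everything to an explicit local calculation in Bryant-type data. Around any $p\in M$, choose a simply connected coordinate chart $(U,z)$ and, using that $F\colon U\to SL(2,\mathbb{C})$ is a holomorphic null immersion, write
\[
F^{-1}dF \;=\; \begin{pmatrix} g & -g^{2}\\ 1 & -g\end{pmatrix}\omega,
\]
with $g$ meromorphic on $U$, $\omega$ holomorphic, and any pole of order $k$ of $g$ cancelled by a zero of order $2k$ of $\omega$, so that $F^{-1}dF$ is nowhere vanishing on $U$.

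Set $A=F^{-1}dF$; since $dF=FA$, the two candidate maps satisfy
\[
F^{-1}\,d(Fe_{3}F^{*})\,(F^{*})^{-1} \;=\; Ae_{3}\,dz + e_{3}A^{*}\,d\bar z,\qquad
F^{-1}\,d(FF^{*})\,(F^{*})^{-1} \;=\; A\,dz + A^{*}\,d\bar z,
\]
and conjugation by $F$ preserves $\det$ on $\mathrm{Herm}(2)$ since $\det F=1$. A direct two-by-two expansion (using that $A$ is null, so the $dz^{2}$- and $d\bar z^{2}$-parts vanish) yields
\[
-\det\!\bigl[d(Fe_{3}F^{*})\bigr]=(1-|g|^{2})^{2}\,\omega\bar\omega,\qquad
-\det\!\bigl[d(FF^{*})\bigr]=(1+|g|^{2})^{2}\,\omega\bar\omega.
\]
The second formula settles the last assertion of the proposition: where $g$ is finite, $\omega\neq 0$ (otherwise $F^{-1}dF$ would vanish there), while near a pole of order $k$ of $g$ one has $(1+|g|^{2})^{2}|\omega|^{2}\sim |g^{2}\omega|^{2}>0$; hence $-\det[d(FF^{*})]$ is a smooth, positive definite $(0,2)$-tensor on all of $M$.

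From the first formula, the set $W=\{p\in M:\det[d(Fe_{3}F^{*})]_{p}\neq 0\}$ equals $\{|g|\neq 1\}$; since $(1-|g|^{2})^{2}|\omega|^{2}$ is real-analytic and not identically zero by hypothesis, $W$ is open and dense. On $W$ the induced metric $(1-|g|^{2})^{2}\omega\bar\omega$ is positive definite, so $f|_{W}$ is a conformal spacelike immersion, and Theorem~\ref{thAiAk} applied on simply connected pieces of $W$ shows that its mean curvature is identically $1$. In particular the singular set is exactly $\{\det[d(Fe_{3}F^{*})]=0\}$.

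The main technical step is to check that every singular point $p$ is admissible. At such $p$ necessarily $|g(p)|=1$, so $g(p)$ is finite and $\omega(p)\neq 0$. For condition (1) of admissibility, set
\[
\beta \;=\; \frac{(1+|g|^{2})^{2}}{(1-|g|^{2})^{2}}
\]
on $U\cap W$; it is $C^{\infty}$ and positive there, and $\beta\,ds^{2}=-\det[d(FF^{*})]$ extends smoothly to a Riemannian metric on $U$ by the previous step. For condition (2), write $\omega=\omega_{z}\,dz$ near $p$; then
\[
f_{z} \;=\; F\,(Ae_{3})\,F^{*} \;=\; F\begin{pmatrix} g & g^{2}\\ 1 & g\end{pmatrix}\omega_{z}\,F^{*},
\]
whose $(2,1)$-entry has factor $\omega_{z}(p)\neq 0$, so $f_{z}(p)\neq 0$ and hence $df(p)\neq 0$. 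I expect the main obstacle to be the bookkeeping around poles of $g$, where the local symbols $g$ and $\omega$ are singular but all intrinsic tensors extend; this is handled either by passing to the dual chart with $\tilde g=1/g$, or by a careful local expansion in a uniformizer.
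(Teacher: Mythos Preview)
The paper does not prove this proposition; it is quoted as background from the literature (essentially Fujimori \cite{kn:F} and Fujimori--Rossman--Umehara--Yamada--Yang \cite{kn:F-R-U-Y-Y}) and immediately followed by a remark, with no proof environment. So there is nothing in the paper to compare against.

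That said, your argument is correct and is precisely the standard one. The key identities
\[
-\det\!\bigl[d(Fe_{3}F^{*})\bigr]=(1-|g|^{2})^{2}\,\omega\bar\omega,
\qquad
-\det\!\bigl[d(FF^{*})\bigr]=(1+|g|^{2})^{2}\,\omega\bar\omega
\]
are verified exactly as you indicate, and they immediately give positivity of $-\det[d(FF^{*})]$, the description of the singular set as $\{|g|=1\}$, and the density of $W$ (if $|g|\equiv 1$ then $g$ is constant by the open mapping theorem, forcing $\det[d(Fe_{3}F^{*})]\equiv 0$, contrary to hypothesis). Your verification of admissibility is also the standard one: the choice $\beta=(1+|g|^{2})^{2}/(1-|g|^{2})^{2}$ gives $\beta\,ds^{2}=d\hat s^{2}$, and $f_{z}=F\,\bigl(\begin{smallmatrix} g & g^{2}\\ 1 & g\end{smallmatrix}\bigr)\omega_{z}\,F^{*}\neq 0$ at points where $\omega_{z}\neq 0$, in particular at every point with $|g|=1$. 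The remark about poles of $g$ is accurate but largely irrelevant here, since singular points never occur at poles of $g$.
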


\begin{rem}
The hypothesis of $det[d(Fe_{3}F^{*})] \neq 0$ is essential. In fact, take $F: 
M \longrightarrow \mathbb{S}_{1}^{3}$ holomorphic null immersion such that 
$$F(z) = \left[\begin{array}{c c }
                     z + 1 & - z\\                                           
                      z    & - z + 1
                     \end{array}\right] $$
                     
So, $f = Fe_{3}F^{*}$ degenerates in every point of $\mathbb{C}$, therefore 
does not provides a immersion. It comes from the fact that 
$det[d(Fe_{3}F^{*})]$ 
is identically zero.
\end{rem}

Using the above theorems, is possible to extend Aiyama-Akutagawa representation 
to CMC-1 faces which domain is not simply connected.

\begin{rem}
Let $F$ be a holomorphic null lift a CMC-1 face $f$ with Aiyama data $(g, 
\omega)$. Let $B \in SU(1, 1)$ be a constant matrix, that is
 
 $$ B =  \left[\begin{array}{c c }
                \overline{p} & - q\\                                           
               - \overline{} & p
               \end{array}\right] \in SU(1, 1) \ , \ p\overline{p} - q 
\overline{q} = 1 $$
               
Thus, $FB$ is a holomorphic null lift of $f$. The Aiyama data $(\hat{g}, 
\hat{\omega})$ corresponding to $(FB)^{-1}d(FB)$ is given by

$$\hat{g} = \frac{pg + q}{\overline{q}g + \overline{p}} \ e \ \hat{\omega} = 
(\overline{q}g + \overline{p})^{2} \omega. $$

2 Aiyama datas $(g, \omega)$ and $(\hat{g}, \hat{\omega})$ are called 
\textit{equivalents} if satisfy the above equality for some $B \in SU(1, 1)$. 
We call the equivalence class of Aiyama data $(g, \omega)$ as Aiyama data 
associated to $f$.
\end{rem}

\begin{rem}

Observe that, if $(g, \omega)$ e $(\hat{g}, \hat{\omega})$ are 2 equivalents 
Aiyama data, and if $G$ and $\hat{G}$ are the hyperbolic Gauss maps associated 
with this data, then

$$ G = \frac{dF_{11}}{dF_{21}} \ \ \ e \ \ \ \hat{G} = 
\frac{\overline{p}dF_{11} + \overline{q}dF_{12}}{\overline{p}dF_{21} + 
\overline{q}dF_{22}} $$

Therefore, $G = \hat{G}$, that is, the hyperbolic Gauss map does not depend of 
the choice of $F$ in the  class. 
\end{rem}

\subsection{CMC-1 Faces With Elliptics Ends}

\begin{defn}\label{CMC1Face-defEE}
 Let $M$ be a Riemann surface, and $f: M \longrightarrow \mathbb{S}_{1}^{3}$ a 
CMC-1 face. Let 
 $ds^{2} = f^{*}(ds_{\mathbb{S}_{1}^{3}}^{2})$. $f$ is  \textit{complete} 
(resp, of \textit{finite type}) if there is a compact set $C$ and a symmetric 
$(0, 2)$-tensor $T$ in $M$ such that 
 $T$ vanishes in $M \setminus C$ and $ds^{2} + T$ is a complete Riemannian 
metric (resp. has finite total curvature).
\end{defn}

\begin{rem}
 For CMC-1 immersions in $\mathbb{S}_{1}^{2}$, the Gaussian curvature $K$ is 
not negative, So the total curvature is the same as the absolute total 
curvature. However, for CMC-1 faces with singular points, the total curvature 
never is finite. 
\end{rem}

\begin{rem}
The universal cover of a  complete CMC-1 face (resp finite type) is not 
necessarily complete (resp finite type), Because the singular set may not be 
compact the universal cover.
\end{rem}

Let $f: M \longrightarrow \mathbb{S}_{1}^{3}$ be a complete CMC-1 face of 
finite type. So $(M, ds^{2} + T)$ is a  complete Riemannian surface with finite 
total curvature. Therefore, $M$ has finite topological type.

Let $\phi: \tilde{M} \longrightarrow M$ be the universal cover of $M$, and $F: 
\tilde{M} \longrightarrow SL(2, \mathbb{C})$ a holomorphic null lift of a CMC-1 
face $f:M \longrightarrow \mathbb{S}_{1}^{3}$. Fix a point $z_{0} \in M$. Let 
$\gamma: [0, 1] \longrightarrow M$ a loop such that $\gamma (0) = \gamma (1) = 
z_{0}$. So there is a unique deck transformation $\tau$ of $\tilde{M}$ 
associated to the homotopy class of $\gamma$. Define the \textit{monodromy 
representation} $\Phi_{\gamma}$ of $F$ by

$$Fo\tau = F\Phi_{\gamma}.$$

As $f$ is well defined in $M$, $\Phi_{\gamma} \in SU(1, 1)$ for every loop 
$\gamma$. Therefore, $\Phi_{\gamma}$ is conjugated to one of the following 
matrices

$$ E_{1} =  \left[\begin{array}{c c }
                e^{i\theta} & 0\\                                           
                          0 & e^{i\theta}
               \end{array}\right] \ ou \ E_{2} = \pm \left[\begin{array}{c c }
                                                           cosh s & senh s\\    
                                       
                                                           senh s & cosh s
                                                        \end{array}\right] \ ou 
\ E_{3} = \pm \left[\begin{array}{c c }
                                                                                
                        1 + i & 1\\                                           
                                                                                
                            1 & 1 - i
                                                                                
                     \end{array}\right] $$
                                                                                
                     De Sitter 3-Space
for $\theta \in [0, 2 \pi)$, $s \in \mathbb{R} \setminus \{0\}$.

\begin{defn}
 Let $f:M \longrightarrow \mathbb{S}_{1}^{3}$ be a complete CMC-1 face of 
finite type with holomorphic null lift $F$. A end of $f$ is called 
\textit{elliptic, hyperbolic or parabolic} if tis monodromy representation is 
conjugated to $E_{1}$, $E_{2}$ or $E_{3}$ in 
 $SU(1, 1)$ respectively.
\end{defn}

\begin{rem}
 As every matrix in  $SU(2, \mathbb{C})$ is conjugated to $E_{1}$ in $SU(2, 
\mathbb{C})$, CMC-1 immersions in 
 $\mathbb{H}^{3}$ have similar properties to CMC-1 faces with elliptic ends in 
$\mathbb{S}_{1}^{3}$.
\end{rem}

\begin{prop}\label{CMC1Face-prop1}
Let $V$ be a neighborhood of a end of $f$ and $f|_{V}$ a spacelike CMC-1 
immersion of finite total curvature, which is complete in the end. Suppose that 
the end is elliptic. So there is a holomorphic null lift $F: \tilde{V} 
\longrightarrow SL(2, \mathbb{C})$ of $f$ with associated Aiyama data $(g, 
\omega)$ such that $$d\hat{s}^{2}|_{V} = (1 + |g|^{2})^{2}|\omega|^{2}$$ is 
single valued in $V$. Besides, $d\hat{s}^{2}$ has complete total curvature and 
is complete at the end. 
\end{prop}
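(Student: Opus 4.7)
The plan is to first exploit the elliptic nature of the monodromy to produce a null lift whose monodromy is diagonal, then to read off the single-valuedness of $d\hat s^2$ from a short algebraic computation, and finally to transfer the asymptotic properties from the face metric $ds^2=(1-|g|^2)^2|\omega|^2$ to $d\hat s^2=(1+|g|^2)^2|\omega|^2$ using that at an elliptic end $|g|$ stays away from $1$.

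First I would normalize the monodromy. Let $F_0\colon\tilde V\to SL(2,\mathbb{C})$ be any holomorphic null lift of $f|_V$, $\tau$ the deck transformation associated to a loop $\gamma$ generating $\pi_1(V)$, and $\Phi_\gamma\in SU(1,1)$ the corresponding monodromy, so that $F_0\circ\tau=F_0\Phi_\gamma$. Since the end is elliptic, $\Phi_\gamma$ is conjugate in $SU(1,1)$ to $E_1=\mathrm{diag}(e^{i\theta},e^{-i\theta})$. Picking $B\in SU(1,1)$ with $B^{-1}\Phi_\gamma B=E_1$ and setting $F:=F_0B$, the excerpt guarantees that $F$ is again a null lift of $f|_V$, and now $F\circ\tau=FE_1$. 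From this, the columns of $F$ transform by $F_{i1}\circ\tau=e^{i\theta}F_{i1}$ and $F_{i2}\circ\tau=e^{-i\theta}F_{i2}$, and either by direct computation or by applying the equivalence rule for Aiyama data one finds
\[
g\circ\tau=e^{-2i\theta}g,\qquad \omega\circ\tau=e^{2i\theta}\omega,
\]
so that $|g|$ and $|\omega|$ are $\tau$-invariant and $d\hat s^2=(1+|g|^2)^2|\omega|^2$ descends to a single-valued metric on $V$.

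For completeness and finite total curvature I would use that $ds^2$ is by hypothesis complete at the end with finite total curvature, and that by the Aiyama--Akutagawa representation $g$ takes values in the exterior of the closed unit disk on the regular set. The crucial input is that at an elliptic end the classification of ends of CMC-1 faces gives a meromorphic extension of $g$ across the puncture with $|g(p)|\neq 1$. Hence $|g|$ is bounded away from $1$ on a small punctured neighborhood, so $d\hat s^2$ and $ds^2$ are mutually quasi-isometric there and completeness transfers. For the total curvature I would use
\[
-K_{\hat s}\,dA_{\hat s}\;=\;\frac{4|g'|^2}{(1+|g|^2)^2}\,dx\wedge dy\;=\;g^{*}dA_{FS},
\]
the pull-back under $g$ of the Fubini--Study area form on $\mathbb{C}\mathbb{P}^1$; since $g$ is meromorphic on $V\cup\{p\}$, this integral is finite.

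The monodromy normalization and the transformation of the Aiyama data are essentially algebraic and straightforward. The \emph{main obstacle} is the last step, where relating $d\hat s^2$ to $ds^2$ asymptotically requires the fine structure of elliptic ends, in particular the meromorphic extension of $g$ with $|g(p)|\neq 1$. That structure is not supplied by the general Aiyama--Akutagawa framework and must be imported from the theory of CMC-1 faces with elliptic ends.
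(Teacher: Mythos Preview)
The paper does not supply its own proof of this proposition; it is quoted as a background result (ultimately from Fujimori \cite{kn:F}) and used later without argument. So there is no in-paper proof to compare against.

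Your first two steps are correct and are the standard ones: conjugating the monodromy inside $SU(1,1)$ to $E_1=\mathrm{diag}(e^{i\theta},e^{-i\theta})$ and then computing $(F\circ\tau)^{-1}d(F\circ\tau)=E_1^{-1}(F^{-1}dF)E_1$ gives exactly $g\circ\tau=e^{-2i\theta}g$, $\omega\circ\tau=e^{2i\theta}\omega$, whence $|g|$, $|\omega|$ and therefore $d\hat s^2$ descend to $V$.

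Your treatment of completeness and finite total curvature, however, both overshoots and contains a slip. You invoke a ``meromorphic extension of $g$ across the puncture with $|g(p)|\neq 1$'', but $g$ is genuinely multivalued on $V$ (it picks up the phase $e^{-2i\theta}$), so it cannot extend meromorphically to the puncture unless $\theta\in\pi\mathbb{Z}$; only $|g|$ descends. More to the point, none of this is needed. Since $f|_V$ is a spacelike immersion one has $|g|\neq 1$ on $V$, and the elementary inequality $(1+|g|^2)^2\ge(1-|g|^2)^2$ gives $d\hat s^2\ge ds^2$ pointwise, so completeness of $ds^2$ at the end immediately yields completeness of $d\hat s^2$. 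For the curvature, a direct computation gives
\[
(-K_{\hat s})\,dA_{\hat s}
=\frac{4|g'|^2}{(1+|g|^2)^2}\,|dz|^2
\le\frac{4|g'|^2}{(1-|g|^2)^2}\,|dz|^2
=K_s\,dA_s,
\]
and since $K_s\ge 0$ for a spacelike CMC-1 immersion (as the paper remarks), finite total curvature of $ds^2$ on $V$ forces $\int_V(-K_{\hat s})\,dA_{\hat s}<\infty$. No quasi-isometry, no control of $|g|$ away from $1$, and no extension of $g$ are required.
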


\begin{prop}\label{CMC1Face-prop2}
Let $f:M \longrightarrow \mathbb{S}_{1}^{3}$ be a complete CMC-1 face of finite 
type with elliptic ends. So, there is a compact Riemann surface $\overline{M}$ 
and a finite number of points $p_{1}, ... , p_{n} \in \overline{M}$, such that 
$M$ é biholomorphic to $\overline{M} \setminus \{p_{1}, ... , p_{n} \}$. 
Besides, the Hopf differential $Q$ of $f$ extends meromorphically to 
$\overline{M}$.
\end{prop}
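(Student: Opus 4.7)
The plan is to treat the two conclusions separately, in each case reducing the problem to a known statement in the hyperbolic $3$-space setting. First, by the finite-type hypothesis there is a compact $C\subset M$ and a symmetric $(0,2)$-tensor $T$ supported in $C$ such that $ds^{2}+T$ is a complete Riemannian metric on $M$ with finite total curvature. I would then invoke Huber's classical theorem on complete surfaces with finite total absolute curvature to conclude that $(M,ds^{2}+T)$ is conformally equivalent to $\overline{M}\setminus E$ for some compact Riemann surface $\overline{M}$ and finite set $E=\{p_{1},\ldots ,p_{n}\}$.

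The next step is to show that this conformal equivalence is biholomorphic with respect to the intrinsic complex structure $J$ of Proposition \ref{DeS-prop1}. Outside $C$ we have $T\equiv 0$, so $ds^{2}+T=ds^{2}$ on $M\setminus C$; since $f|_{W}$ is conformal, the conformal class coming from $ds^{2}+T$ agrees with $J$ on the nonempty open set $W\setminus C$. Because two complex structures on a connected oriented surface that coincide on an open set coincide everywhere, the Huber identification is a biholomorphism $M\cong \overline{M}\setminus E$.

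For the extension of $Q$, I would first check that $Q$ is a well-defined holomorphic quadratic differential on $M$. A direct computation with the $SU(1,1)$-action on Aiyama data recorded in the previous remark (using $p\bar{p}-q\bar{q}=1$) gives $\hat{\omega}\,d\hat{g}=\omega\,dg$, so $Q$ is invariant under the monodromy of $F$ and descends to $M$. It then suffices to show that $Q$ extends meromorphically across each puncture $p_{i}$. Proposition \ref{CMC1Face-prop1} supplies, around any elliptic end, a punctured neighborhood $V$ of $p_{i}$ and a null lift $F$ with Aiyama data $(g,\omega)$ such that $d\hat{s}^{2}=(1+|g|^{2})^{2}|\omega|^{2}$ is single-valued on $V$, complete at $p_{i}$, and of finite total curvature. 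The associated map $\hat{f}=FF^{*}$ is then a CMC-$1$ annular end of $\mathbb{H}^{3}$ that is complete and of finite total curvature, to which Bryant's theorem \ref{thBryant} and the standard local analysis of such ends apply; in particular the Hopf differential $\omega\,dg$ of $\hat{f}$ extends meromorphically to $p_{i}$. By the remark identifying the Hopf differential of $f$ with that of $\hat{f}$, we obtain the meromorphic extension of $Q$ at $p_{i}$, hence to all of $\overline{M}$.

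The step I expect to require the most care is the reduction to the hyperbolic side at the ends: the elliptic hypothesis is what makes $d\hat{s}^{2}$ single-valued in a full punctured neighborhood of each end, and thereby lets $\hat{f}$ furnish a genuine CMC-$1$ end of $\mathbb{H}^{3}$ to which Bryant--Umehara--Yamada theory can be applied. Without the elliptic assumption the $SU(1,1)$-monodromy would be nontrivial on $d\hat{s}^{2}$ and this bridge to the well-developed hyperbolic case would collapse; thus verifying that the local extension obtained upstairs really yields a meromorphic extension of the globally defined $Q$ downstairs is the main subtlety.
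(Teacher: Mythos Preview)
The paper does not prove this proposition; it is stated in Section~3.2 as a background result imported from the literature (the surrounding propositions are attributed to Fujimori \cite{kn:F} and Fujimori--Rossman--Umehara--Yamada--Yang \cite{kn:F-R-U-Y-Y}), so there is no ``paper's own proof'' to compare against. Your outline---Huber's theorem for the conformal type, then the passage $f\mapsto \hat f=FF^{*}$ to a CMC-$1$ end in $\mathbb{H}^{3}$ via Proposition~\ref{CMC1Face-prop1} to extend $Q$---is indeed the standard route followed in those references, and the identification of the Hopf differentials of $f$ and $\hat f$ is exactly the hinge.

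One step in your write-up is not correct as stated: the assertion that ``two complex structures on a connected oriented surface that coincide on an open set coincide everywhere'' is false in general (one can freely deform an almost complex structure on a disk and leave it unchanged elsewhere). What is true, and what you actually need, is much weaker: the biholomorphism type of $M$ is determined by the conformal class of the metric in a punctured neighbourhood of each end, and there $ds^{2}+T=ds^{2}$ is conformal for $J$ (the singular set of $f$ is compact by completeness, so after possibly enlarging $C$ the ends lie in $W$). Alternatively, and closer to Fujimori's argument, one applies Huber directly to the nondegenerate metric $d\hat s^{2}=(1+|g|^{2})^{2}|\omega|^{2}$, which Proposition~\ref{CMC1Face-prop1} guarantees is single-valued, complete and of finite total curvature near each elliptic end and which is everywhere conformal for $J$; this sidesteps the tensor $T$ entirely. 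Either way the conclusion stands, but you should replace the incorrect uniqueness claim by one of these two arguments.
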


Similarly to immersed Bryant surfaces in hyperbolic 3-space, the hyperbolic 
Gauss map does not extends necessarily to the ends. Because of this, we have 
the following definition

\begin{defn}\label{CMC1Face-defreg}
Let $f: M \longrightarrow \mathbb{S}_{1}^{3}$ a CMC-1 face. A end $p_{j}$ of 
$M$ is \textit{regular} if
 the hyperbolic Gauss map does extend meromorphically to $p_{j}$, otherwise is 
called \textit{irregular}.
\end{defn}

Let $f:M \longrightarrow \mathbb{S}_{1}^{3}$ be a CMC-1 face of finite type 
with elliptic ends. Thus $M = \overline{M} \setminus \{p_{1}, ... , p_{n} \}$. 
Let $G$ and $Q$ the hyperbolic Gauss map and the Hopf differential of $f$ 
respectively.

\begin{defn}\label{CMC1Face-defmetric}
 We call the metric 

 $$d\hat{s}^{\sharp 2} = (1 + 
|G|^{2})^{2}\left(\frac{Q}{dG}\right)\overline{\left(\frac{Q}{dG}\right)}$$

 of \textit{lift metric} of CMC-1 face $f$. Besides
 
 $$-(K_{d\hat{s}^{\sharp 2}})d\hat{s}^{\sharp 2}|_{V}  =  
\frac{4dGd\overline{G}}{(1 + |G|^{2})^{2}} $$
 
 \end{defn}

\begin{rem}
 Observe that $d\hat{s}^{\sharp 2}$ and $-(K_{d\hat{s}^{\sharp 
2}})d\hat{s}^{\sharp 2}$ are given in terms of $G$ and $Q$, and such functions 
are defined in $M$, thus $d\hat{s}^{\sharp 2}$ and $-(K_{d\hat{s}^{\sharp 
2}})d\hat{s}^{\sharp 2}$ are defined in $M$ also.
\end{rem}

 Fujimori \cite{kn:F} showed that

\begin{prop}\label{CMC1Face-thFuj}
 Let $f:M \longrightarrow \mathbb{S}_{1}^{3}$ be a CMC-1 face. Assume that each 
end of $f$ elliptic and regular. Thus, if $f$ is complete and of finite type, 
then the lift metric $d\hat{s}^{\sharp 2}$ is complete and of finite total 
curvature in $M$.
\end{prop}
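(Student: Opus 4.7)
The plan is to establish completeness and finite total curvature of $d\hat{s}^{\sharp 2}$ separately, reducing each to an already-quoted ingredient. Since the interior of $M$ is (up to the finitely many ends) relatively compact and $dG$ does not vanish identically on a regular end, the only points where either property can fail are the ends $p_1,\dots,p_n$, so it suffices to work in a punctured neighborhood $V_j$ of each $p_j$.

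For completeness, I would first invoke Proposition \ref{CMC1Face-prop1}: because $p_j$ is elliptic, there is a holomorphic null lift $F\colon\tilde V_j\to SL(2,\pc)$ with Aiyama data $(g,\omega)$ for which the auxiliary metric $d\hat{s}^{2}=(1+|g|^2)^2|\omega|^2$ is single-valued on $V_j$, complete at $p_j$, and of finite total curvature there. The map $\hat f=FF^{*}\colon V_j\to\mathbb{H}^3$ is then a \emph{Bryant surface} with Bryant data $(g,\omega)$, hyperbolic Gauss map $G$, and Hopf differential $Q$ (this is the remark after the definition of the hyperbolic Gauss map on $\dsi$). Proposition \ref{thUY} identifies the dual Bryant data of $\hat f$ as $(G,-Q/dG)$, and the associated dual metric is exactly $d\hat{s}^{\sharp 2}|_{V_j}$. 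Proposition \ref{thYu} then gives that $d\hat{s}^{\sharp 2}$ is complete at $p_j$ iff $d\hat{s}^{2}$ is, and the latter is ensured by Proposition \ref{CMC1Face-prop1}. Doing this at each end yields completeness of $d\hat{s}^{\sharp 2}$ on $M$.

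For finite total curvature, I would use the formula from Definition \ref{CMC1Face-defmetric}:
$$\int_M (-K^{\sharp})\,dA^{\sharp}\;=\;\int_M \frac{4\,dG\wedge d\overline{G}}{(1+|G|^2)^2}\;=\;c\int_M G^{*}\omega_{FS},$$
where $\omega_{FS}$ is the Fubini--Study area form on $\pc\cup\{\infty\}$ and $c$ is a universal constant. Because every end is \emph{regular}, $G$ extends meromorphically to the compactification $\overline{M}$ (which exists by Proposition \ref{CMC1Face-prop2}), giving a holomorphic map $\overline{G}\colon\overline{M}\to\pc\cup\{\infty\}$. Pulling back a smooth $2$-form along a holomorphic map from a compact Riemann surface produces an integrable form, so
$$\int_M (-K^{\sharp})\,dA^{\sharp}\;=\;c\int_{\overline{M}}\overline{G}^{*}\omega_{FS}\;=\;4\pi c\,\deg(\overline{G})\;<\;\infty.$$

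The main obstacle is the global coherence of the dual-metric interpretation: the lift $F$ is only defined on $\tilde M$, and different end-neighborhoods may require different branches, so one has to check that the locally defined dual Bryant metrics patch together to the globally defined $d\hat{s}^{\sharp 2}$. This is handled by the fact that the expression for $d\hat{s}^{\sharp 2}$ uses only the intrinsic data $G$ and $Q$, which descend to single-valued objects on $M$ (the hyperbolic Gauss map is invariant under the $SU(1,1)$ ambiguity of $F$, and $Q=\omega\,dg$ is invariant under equivalence of Aiyama data); consequently the local identifications with a dual Bryant metric glue into a single well-defined metric on $M$, and Propositions \ref{thUY} and \ref{thYu} apply end by end without further compatibility checks.
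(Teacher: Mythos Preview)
The paper does not supply its own proof of this proposition: it is quoted from Fujimori \cite{kn:F} and stated without argument, so there is no in-paper proof to compare against. Your sketch is in fact the same mechanism the authors exploit later in the proof of Theorem~1 (identifying $d\hat{s}^{\sharp 2}$ with the dual metric of the companion Bryant immersion and reading off algebraicity from the meromorphic extension of $G$), so your route is consistent with the paper's point of view.

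One point to tighten: you invoke Proposition~\ref{thYu} end-by-end, but as stated there Yu's result asserts equivalence of \emph{global} completeness of $ds^{2}$ and $ds^{\sharp 2}$, not completeness at a single puncture. To make your use legitimate you should either (a) note that Yu's proof proceeds by a pointwise comparison of conformal factors, so the equivalence localizes to any punctured neighborhood, or (b) argue more directly: on $V_j$ the data $(G,\omega^{\sharp})=(G,-Q/dG)$ are meromorphic at $p_j$ (by regularity of the end and Proposition~\ref{CMC1Face-prop2}), and the standard order-of-pole analysis for $(1+|G|^{2})^{2}|\omega^{\sharp}|^{2}$ then shows that divergent paths to $p_j$ have infinite $d\hat{s}^{\sharp 2}$-length once you know they have infinite $d\hat{s}^{2}$-length. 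Apart from this localization issue, the finite-total-curvature part via $\deg(\overline{G})$ and the global patching via the $SU(1,1)$-invariance of $G$ and $Q$ are clean and correct.
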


\section{The proof of theorem 1}

\ \ \

\begin{proof}
 Suppose that $\phi:M \longrightarrow \mathbb{H}^{3}$ is a complete algebraic 
CMC-1 immersion with hyperbolic 
 Gauss map $G$ and Bryant data $(g, \omega)$. Then, take  $\phi^{\sharp}: M \longrightarrow \mathbb{H}^{3}$
its dual immersion with dual data $(G, \omega ^{\sharp})$ and let $\psi: M \longrightarrow \mathbb{R}^{3}$ be 
the minimal immersion cousin of the $\phi^{\sharp}$, with Weierstrass data $(G, \omega ^{\sharp})$. Note that
 $\psi$ is a complete minimal surface with finite total curvature, since $\phi^{\sharp}$ is complete
  and the total curvature of $\psi$ coincides with total dual curvature of 
$\psi^{\sharp}$, witch is finite since
  $\phi$ is algebraic. Besides, the Gauss map of $\psi$ coincides with the hyperbolic Gauss map $G$ of $\phi$.
 Then by theorem \ref{thm:thJM}, $G$ omits 2 points at most.

 Suppose now that $M$ is a complete Bryant surface with finite total curvature. 
Then $M$ is conformal 
to $\overline{M} \setminus E$, where $\overline{M}$ is a compact surface and $E$ is a finite set, the ends
of the surface. Suppose that $M$ has at least one irregular end $p \in E$. 
Then $p$ is a essential 
singularity of $G$, so by Big Picard's theorem, $G$ omits 2 points at most. 
Therefore we can suppose 
that all ends of $M$ are regular. If all ends are regular, then $M$ is a algebraic Bryant surface,
and therefore, $G$ can omits 2 points at most.

 Suppose now that $M$ is a a complete CMC-1 face of finite type with elliptic 
ends. Let $(g, \omega)$
be its Aiyama data. Define $\phi: M \longrightarrow \mathbb{H}^{3}$ the immersion wich has
$(g, \omega)$ as its Bryant data. Then, $\phi: M \longrightarrow \mathbb{H}^{3}$ is a complete
CMC-1 immersion. Note that the lift metric of the CMC-1 face $M$ coincides with the
dual metric of $\phi$. Then $\phi(M)$ is a algebraic Bryant surface, because $M$ has finite type.
Observe that the hyperbolic Gauss map $G$ of $M$ coincides with the hyperbolic Gauss map of 
$\phi: M \longrightarrow \mathbb{H}^{3}$. Therefore, $G$ can miss 2 points at most.
   
\end{proof}

\begin{rem}
This estimative is sharp since:

\begin{enumerate}
\item[(i)] the catenoid cousin is a complete Bryant surface with finite total curvature
witch hyperbolic Gauss map that omits exactly 2 points.
\item[(ii)] $\mathbb{C} \setminus \lbrace 0, 1 \rbrace$ is a complete algebraic Bryant 
surface witch hyperbolic Gauss map that omits exactly 2 points.
\item[(iii)]  the elliptical catenoid is a complete CMC-1 face of finite type with elliptic
 ends with hyperbolic Gauss map that omits exactly 2 points.
\end{enumerate}  
\end{rem}

\begin{cor}
 Let $M$ be a properly embedded Bryant surface of finite topological 
type. Then, the hyperbolic Gauss map is constant and $M$ is a horosphere or the 
image of hyperbolic Gauss map omits 2 points at most.
\end{cor}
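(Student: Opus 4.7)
The plan is to reduce the corollary to Theorem~\ref{th}(1) by invoking the geometric finiteness theorem of Collin, Hauswirth and Rosenberg \cite{kn:C-H-R-1}. The structural input I need from their work is that a properly embedded Bryant surface of finite topological type in $\mathbb{H}^{3}$ automatically has finite total curvature (concretely, each annular end is asymptotic to either a horosphere or a catenoid cousin, so in particular every end is regular in the sense of the Definition in \S2.3).

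First, I would invoke that result in the form: under the hypotheses of the corollary, $M$ is conformally $\overline{M}\setminus\{p_1,\ldots,p_n\}$ with $\int_M|K|\,dA<\infty$. This places $M$ squarely in case~(1) of Theorem~\ref{th}. Applying Theorem~\ref{th}(1) then yields exactly the dichotomy claimed: either the hyperbolic Gauss map $G$ omits at most two points of $\mathbb{S}^{2}_{\infty}$, or $G$ is constant, in which case $M$ is a horosphere. No further argument is needed once the finiteness of the total curvature is established.

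The main obstacle is not conceptual but bibliographic: one needs the precise statement that proper embeddedness together with finite topological type forces finite total curvature for CMC-$1$ surfaces in $\mathbb{H}^3$. If that ingredient were unavailable, the fallback plan would be to analyze each annular end directly: using the maximum principle with horospheres and catenoid cousins as barriers, show that an embedded annular end of finite topology is a graph over a horosphere with curvature decay fast enough to integrate, deduce meromorphic extension of the hyperbolic Gauss map (hence regularity of the end in the paper's sense), and conclude that $M$ is algebraic, whence Theorem~\ref{th}(2) applies. This end analysis is precisely the heart of \cite{kn:C-H-R-1} and is where essentially all the work of the corollary is hidden.
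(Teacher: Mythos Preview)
Your proposal is correct and follows exactly the paper's own argument: invoke Collin--Hauswirth--Rosenberg to obtain finite total curvature, then apply Theorem~\ref{th}(1). The only correction is bibliographic: the finiteness-of-total-curvature result for properly embedded finite-topology Bryant surfaces is in \cite{kn:C-H-R-2} (the Annals paper), not \cite{kn:C-H-R-1}, which already \emph{assumes} finite total curvature.
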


\begin{proof}
 Collin, Hauswirth and Rosenberg showed in \cite{kn:C-H-R-2} that a
 properly embedded 
Bryant surface of finite topological type has finite total curvature, thus the 
result follows from the previous theorem.
\end{proof}

\section{Curvature estimate and volume growth.}

We begin with the following theorem

\begin{thm}\label{thm5.1}
Let $M$ be one complete surface of type:
\begin{enumerate}
\item[(1)] minimal surface with finite total curvature  into $\R^3$;
 \item[(2)] Algebraic Bryant surface with finite total curvature;
 \item[(3)] Algebraic Bryant surface endowed with the dual metric;
 \item[(4)] CMC-1 Face surface of finite type, elliptic ends endowed with the lift metric.
\end{enumerate}
If $\kappa\colon M\to\R$ is the curvature of $M$, there is a function $k: [0, 
\infty) 
\longrightarrow [0, \infty)$, at least $C^{2}$ such that,
 
 \begin{enumerate}
  \item[(i)] $0 \geq \kappa(p) \geq -k(r(p)),\quad \forall\ p\in M,\ 
r(p)=\text{dist}^M(p,p_0),\ p_0\ \text{fixed},$
  \item[(ii)] $\int_{0}^{\infty} sk(s)ds<\infty.$
 \end{enumerate}

\end{thm}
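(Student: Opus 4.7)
The plan is to reduce all four cases to case (1) via the isometries pointed out in the remark following Theorem~\ref{th2}, then establish the curvature decay for a complete minimal surface in $\R^3$ with finite total curvature, and finally package the pointwise bound into a smooth radial majorant.

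First I would carry out the reductions. In case (2), the Bryant metric with data $(g,\omega)$ is $(1+|g|^2)^2|\omega|^2$, identical in form to the Weierstrass metric of a minimal immersion into $\R^3$; the Gauss curvature formula $\kappa=-4|g'|^2/|f|^2(1+|g|^2)^4$ is intrinsic to the metric, so non-positivity of $\kappa$ and finite total curvature transfer unchanged. In case (3), the dual metric $(1+|G|^2)^2|Q/dG|^2$ is again of Weierstrass type, and the algebraic hypothesis supplies finite total curvature of this metric. In case (4), Proposition~\ref{CMC1Face-thFuj} guarantees that the lift metric is complete with finite total curvature and of the same Weierstrass form. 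In every case the surface is thus isometric to a complete minimal surface in $\R^3$ with finite total curvature, so it suffices to prove the estimate under that single hypothesis.

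Next I would analyze such a minimal surface end-by-end. By Osserman's theorem, $M$ is conformally $\overline{M}\setminus\{p_1,\dots,p_n\}$ with $(g,f)$ extending meromorphically to $\overline{M}$. Near each puncture, in a uniformizer $z$ with $z=0$ at $p_i$, completeness forces $\omega=f\,dz$ to have a pole of order at least $2$, so $|f|\gtrsim |z|^{-2}$ for small $|z|$. The intrinsic distance from a fixed base point then satisfies $r(p)\gtrsim \int_{|z|}^{\epsilon} t^{-2}\,dt \sim |z|^{-1}$ as $z\to 0$. Meanwhile $|g'|^2/(1+|g|^2)^4$ is bounded near $p_i$ (since $g$ extends meromorphically), so
\[
|\kappa(p)|=\frac{4|g'|^2}{|f|^2(1+|g|^2)^4}=O(|z|^{4})=O\!\left(\frac{1}{r(p)^{4}}\right),
\]
with strictly faster decay at planar or higher-order ends. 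Since there are only finitely many ends and $|\kappa|$ is continuous, hence bounded, on the compact complement, we obtain constants $C,r_0>0$ with $0\leq -\kappa(p)\leq C/(1+r(p)^{4})$ for every $p\in M$.

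Finally I would define $k$: set $k_0(r):=\sup\{|\kappa(p)|:r(p)=r\}$ and let $k$ be any $C^{\infty}$ function on $[0,\infty)$ with $k\geq k_0$ and $k(r)\leq C'/(1+r^{4})$ for large $r$, obtained by interpolating between the uniform bound on a compact core of radius $r_0$ and the tail bound $C/r^{4}$. Condition (i) is then immediate, and (ii) follows from $\int_{0}^{\infty}s/(1+s^{4})\,ds<\infty$. The main technical step, and the only one requiring genuine care, is the end-by-end asymptotic analysis: one must verify uniformly at each puncture that $r\sim |z|^{-1}$ and $|\kappa|\lesssim |z|^{4}$, tracking the precise pole orders of $g$ and $f$ allowed by the joint constraints of completeness and finite total curvature (and, in cases (3)--(4), checking that these orders are preserved under the passage to the dual/lift data). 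This is a concrete computation with the Weierstrass data, but it has to be done case-by-case and is where the bulk of the work sits.
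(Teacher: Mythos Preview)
Your reduction of cases (2)--(4) to case (1) and the end-by-end strategy match the paper exactly. The gap is in the conversion from $|z|$ to $r$. From pole order $\ge 2$ you correctly get $|f|\gtrsim|z|^{-2}$, hence $|\kappa|\le C|z|^4$ and the \emph{lower} bound $r(p)\gtrsim|z|^{-1}$. But the step $O(|z|^4)=O(r^{-4})$ requires $|z|\lesssim r^{-1}$, i.e.\ the \emph{upper} bound $r(p)\lesssim|z|^{-1}$; your inequality points the wrong way and yields no upper control on $|\kappa|$ in terms of $r$. An upper bound on $r$ comes from the length of a single radial path, and that needs an upper bound on $(1+|g|^2)|f|$, which in turn requires the \emph{exact} pole order $\nu_j$ of $\omega$ (and the order of $g$) at each end, not merely $\nu_j\ge 2$.

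Tracking the actual orders one gets $r\lesssim|z|^{-(\nu_j-1)}$ from the radial path while $|\kappa|\lesssim|z|^{2\nu_j}$ (or better), hence only
\[
|\kappa|\ \lesssim\ r^{-2\nu_j/(\nu_j-1)}\ =\ r^{-(2+2/(\nu_j-1))},
\]
so the uniform $r^{-4}$ decay fails in general (Enneper's end, for instance, gives $r^{-8/3}$). This is exactly the computation the paper carries out: it records the orders $I(w_j),\ \beta(w_j)$ from Bryant's normal form, bounds $r\le c_3|z|^{-I(w_j)}$ via the radial path, and arrives at $\kappa\ge -c_0/r^{2+\epsilon}$ with $\epsilon=\min_j 2(1+\beta(w_j))/I(w_j)>0$. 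Once you reverse the direction of the distance estimate and relax the decay exponent from $4$ to $2+\epsilon$, your argument coincides with the paper's.
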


\begin{proof}
The Bryant surface with finite total curvature and regular ends is isometric to 
its cousin, that is, to one complete minimal surface into $\R^3$ with finite 
total curvature. 
The CMC-1 Face dual surface of finite type and elliptic ends is isometric to 
the dual of one
 algebraic Bryant surface (with finite total dual curvature) and both are 
isometric to one complete minimal surface into $\R^3$ with finite total 
curvature.

 We know that there is a compact Riemann surface $\overline{M}$ 
and a finite set $E = \{w_{1}, \ ... \ , w_{m} \}$ such that $M$ is conformal 
to $\overline{M} \smallsetminus E$, where $w_{i}$ are the ends of the surface. 
The Gauss map, for a minimal surface, and the hyperbolic Gauss map, for the
Bryant surface, $G$ of $M$ does extend meromorphically to the ends, and 
$G:\overline{M} \longrightarrow \mathbb{S}_{\infty}^{2}$ is a branched 
covering. 
Take $R_{0} > 0$ such that $\overline{M} \setminus B_{R_{0}}^{M}(o)$ is the 
finite 
union of the subends $E_{1}, ... , E_{m}$ . Bryant Proposition 4 \cite{kn:B} 
showed 
that
is possible parametrize each subend $E_{i}$ using the Bryant data $(g, f)$, 
in$\{z \in \mathbb{C} / 0 < 
|z| < r_{j} \ , \ \ r_{j} > 0\}$  by
 
 \begin{equation}
		\left\{ 
		\begin{array}{l l}
		g(z) = z^{1 + \beta (w_{i})} \hat{g}(z) \\
		f(z) = \frac{1}{z^{1 + I(w_{i})}} \hat{f}(z)
                \end{array} \right.
\end{equation}

 Where $\hat{g}$ and $\hat{f}$ are analytic, never vanish and extend 
meromorphically to $z=0$. Is well known that is possible to get a similar 
parametrization for minimal surfaces with finite total curvature.
 
 Let $\gamma_j=\partial E_{j}.$ For each $|z|<r_{j}$ let $\alpha_{j}$ be the 
path $re^{i \theta}$, $|z| \leqslant r \leqslant r_{j}$. We have 
 
\begin{eqnarray*}
 \text{dist}^M(\gamma_{j},z) &=& inf_{\alpha} \{ l(\alpha) \} \\
                           & \leq & l(\alpha) \\
                           &=& \int_{\alpha} |ds| \\
                           & \leq & \int_{r_j}^{|z|}|f|(1+|g|^2)|dz| \\
                           &=&  \int_{r_{j}}^{|z|} \frac{1}{|z|^{1 + 
I(w_{j})}}|\hat{f}(z)|(1 + |z|^{2 + 2 \beta(w_{j})}|\hat{g}(z)|^{2})|dz| \\
                           &=&  \int_{r_{j}}^{|z|} \frac{1}{|z|^{1 + 
I(w_{j})}}|\hat{f}(z)|(1 + |z|^{2 + 2 \beta(w_{j})}|\hat{g}(z)|^{2})|dz| \\
\end{eqnarray*}

Observe that $\hat{f}$ e $\hat{g}$ are bounded in $ 0 < |z| < r_{j}$. So, in 
this set, we have

\begin{eqnarray*}
dist_{M}(\gamma_{j},z) & \leq & \int_{r_{j}}^{|z|} \frac{1}{|z|^{1 + 
I(w_{j})}}|{\hat{f}}(z)|(1 + |z|^{2 + 2 \beta(w_{j})}|{\hat{g}}(z)|^{2})|dz| \\
                       & \leq & \int_{r_{j}}^{|z|} \frac{1}{|z|^{1 + I(w_{j})}} 
c_{1}(1 + |r_{j}|^{2 + + 2 \beta(w_{j})}c_{2}^{2})|dz| \\
                       & = & \int_{r_{j}}^{|z|} \frac{1}{|z|^{1 + 
I(w_{j})}}c_{3} |dz| \\
\end{eqnarray*}

Therefore

\begin{eqnarray}
 \text{dist}^M(\gamma_{j},z) \leq \frac{c_{3}}{|z|^{I(w_{j})}}.
\end{eqnarray}

Beside that, the Gaussian curvature $\kappa$ satisfies

\begin{eqnarray*}
 |\kappa(z)| &=& \frac{4|g'|^2}{|f|^2(1+|g|^2)^4} \\
             & \leq & \frac{4|z^{\beta(w_{j})}\hat{g}(z) + z^{1 + 
\beta(w_{j})}\hat{g}'(z)|^2}{|\frac{1}{z^{1 + I(w_{j})}}\hat{f}(z)|^2(1+|z^{1 + 
\beta(w_{j})}g_{1}(z)|^2)^4} 
\end{eqnarray*}

Observe that $1 \leq |1 + |g(z)|^{2}|$, therefore $\frac{1}{|1 + |g(z)|^{2}|} 
\leq 1$. Besides, $\hat{f}$, $\hat{g}$ and $\hat{g}'$ are bounded in $ 0 < |z| 
< r_{j}$. So,

\begin{eqnarray*}
 |\kappa(z)| & \leq & \frac{4|z^{\beta(w_{j})}\hat{g}(z) + z^{1 + 
\beta(w_{j})}\hat{g}'(z)|^2}{|\frac{1}{z^{1 + I(w_{j})}}\hat{f}(z)|^2(1+|z^{1 + 
\beta(w_{j})}\hat{g}(z)|^2)^4} \\
             & \leq & \frac{4(|z|^{\beta(w_{j})}|\hat{g}(z)| + |z|^{1 + 
\beta(w_{j})}|\hat{g}'(z)|)^2}{\frac{|\hat{f}(z)|^2}{|z|^{2 + 
2I(w_{j})}}(1+|z^{1 + \beta(w_{j})}\hat{g}(z)|^2)^4} \\
             & = & 4 \frac{|z|^{2 + 2I(w_{j})}(|z|^{\beta(w_{j})}|\hat{g}(z)| + 
|z|^{1 + \beta(w_{j})}|\hat{g}'(z)|)^2}{|\hat{f}(z)|^2(1+|z^{1 + 
\beta(w_{j})}\hat{g}(z)|^2)^4} \\
             & \leq & 4 \frac{|z|^{2 + 2I(w_{j})}(|z|^{\beta(w_{j})}\hat{c}_{1} 
+ |z|^{1 + \beta(w_{j}}\hat{c}_{2})}{\hat{c}_{3}} \\
             & = & \hat{c}_{4}|z|^{2 + 2I(w_{j}) + 2\beta(w_{j})}(1 + |z|)^{2} 
\\
             & = & \hat{c}_{4}|z|^{2 + 2I(w_{j}) + 2\beta(w_{j})}(1 + 
r_{j})^{2} \\
             & = & \hat{c}_{5}|z|^{2 + 2I(w_{j}) + 2\beta(w_{j})}
\end{eqnarray*}

Where $|\hat{g}(z)| \leq \hat{c}_{1}$, $|\hat{g}'(z)| \leq \hat{c}_{2}$ and 
$|\hat{3}(z)| \geq \hat{c}_{3}$ for $|z| \leq r_{j}$, and
$\hat{c}_{4} = 4.max\{\hat{c}_{1}, \hat{c}_{2} \}$.

Now, let $\rho(z)$ be the geodesic distance from $o \in M$ to $z$. Note that
 $\rho(z) \leq \frac{c_{3}}{|z|^{I(w_{j}}}$, so

\begin{eqnarray*}
 |z|^{I(w_{j})} & \leq & \frac{c_{3}}{\rho} 
\end{eqnarray*}

therefore,

\begin{eqnarray*}
 |z| & \leq & \frac{c_{3}}{\rho^{\frac{1}{I(w_{j})}}} 
\end{eqnarray*}

Replacing this in the Gaussian curvature inequality above

\begin{eqnarray*}
 |\kappa(z)| & \leq & c_{4}|z|^{2 + 2I(w_{j}) + 2\beta(w_{j})} \\
 - \kappa(z) & \leq & c_{4}|z|^{2 + 2I(w_{j}) + 2\beta(w_{j})} \\
   \kappa(z) & \geq & - c_{4}|z|^{2 + 2I(w_{j}) + 2\beta(w_{j})} \\
             & \geq & - c_{4} \left( \frac{c_{5}}{\rho^{\frac{1}{I(w_{j})}}} 
\right)^{2 + 2I(w_{j}) + 2\beta(w_{j})} \\
             & = & - \frac{c_{6}}{\rho^{2 + 2\frac{1 + \beta(w_{j})}{I(w_{j})}}}
\end{eqnarray*}

Thus, we have

\begin{eqnarray}
 0 \geq \kappa(z) \geq - \frac{c_{6}}{\rho^{2 + \frac{1 + 
\beta(w_{j})}{I(w_{j})}}} \ , \ para \ |z| < r_{j}
\end{eqnarray}

Take $c_{0} \geq max_{j} \{ c_{6} \}$, and define

\begin{eqnarray*}
 \epsilon = inf \{2 \frac{1 + \beta(w_{j}}{I(w_{j})} | 1 \leq j \leq m \}
\end{eqnarray*}

and

\begin{eqnarray*}
 \kappa_{o}  = sup_{\rho(p)} |\kappa(z)|.
\end{eqnarray*}

Take a \textit{cut off function} $k: [0, \infty) \longrightarrow [0, \infty)$ 
of class $C^{\infty}$ such that $k(s) \geq |\kappa(z)|$ for all
$\rho(p) \leq R_{o}$ and $k(s) = \frac{c_{o}}{s^{2 + \epsilon}}$ , for all $s 
\geq R_{o}$.

So we have

\begin{eqnarray*}
 \int_{0}^{\infty} sk(s)ds & = & \int_{0}^{R_{o}} sk(s)ds + 
\int_{R_{o}}^{\infty} sk(s)ds \\
                           & \leq & \int_{0}^{R_{o}} s\kappa_{o} ds + 
\int_{R_{o}}^{\infty} s \frac{c_{o}}{s^{2 + \epsilon}} ds \\
                           & = & \kappa_{o}R_{o}^{2} + \int_{R_{o}}^{\infty} 
\frac{c_{o}}{s^{1 + \epsilon}} ds \\
                           & = & C_{7} + c_{o} \lim\limits_{y \rightarrow 
\infty} \int_{R_o}^{y} \frac{1}{s^{1 + \epsilon}} ds \\
                           & = & C_{7} -\frac{c_{o}}{\epsilon}  \lim\limits_{y 
\rightarrow 
\infty} \left( \frac{1}{y^{\epsilon}} - \frac{1}{R_{o}^{\epsilon}} \right) \\
                           & < & \infty.
\end{eqnarray*}
\end{proof}
\begin{lem}
 Let $M$ be a complete surface with curvature $\kappa,\ 0\geq \kappa\geq k,$ 
where $k(r)$ is the function of last theorem. If $B_r$ is the geodesic ball of 
$M$ with fixed center then there are constant $c$ and $r_0>0$ such that 
$\text{vol}(B_r)\leq cr^2$ for all $r\geq r_0.$
\end{lem}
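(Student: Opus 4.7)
The plan is to reduce the volume bound to a one-dimensional comparison between the Jacobi field along geodesics emanating from the fixed center $p_0$ and a model solution controlled by $k$. Since $M$ is complete, $\exp_{p_0}\colon T_{p_0}M\to M$ is defined on all of $T_{p_0}M$. In geodesic polar coordinates $(r,\theta)$ the metric area form reads $\rho(r,\theta)\,dr\,d\theta$, where the radial Jacobi field satisfies
$$\rho''+\kappa\,\rho=0,\qquad \rho(0,\theta)=0,\ \rho'(0,\theta)=1,$$
with primes denoting $\partial_r$. Because $\kappa\leq 0$, $\rho$ is convex in $r$ with positive initial slope, hence strictly positive for $r>0$; there are no conjugate points along any radial geodesic, so $\exp_{p_0}$ is a local diffeomorphism. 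Since every point of $B_r(p_0)$ is the exponential image of some $v\in T_{p_0}M$ with $|v|\leq r$, the change-of-variables inequality gives
$$\text{vol}(B_r)\leq \int_0^{2\pi}\!\!\int_0^r \rho(s,\theta)\,ds\,d\theta.$$

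Next, I would compare $\rho(\cdot,\theta)$ to the model solution $\phi$ of
$$\phi''(r)=k(r)\,\phi(r),\qquad \phi(0)=0,\ \phi'(0)=1,$$
which is itself positive, increasing, and convex for $r>0$. The Wronskian $v=\phi\rho'-\phi'\rho$ vanishes at $r=0$ and satisfies
$$v'(r)=\phi(\rho''-k\rho)=-\phi\,(\kappa+k)\,\rho\leq 0,$$
because $\kappa+k\geq 0$ and $\phi,\rho\geq 0$. Hence $(\rho/\phi)'=v/\phi^2\leq 0$, and L'Hôpital at $0$ yields $\rho/\phi\to 1$; thus $\rho(r,\theta)\leq \phi(r)$ uniformly in $\theta$, and
$$\text{vol}(B_r)\leq 2\pi\int_0^r \phi(s)\,ds.$$

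The remaining task is to prove $\phi(r)=O(r)$. Writing $\phi(r)=r\,g(r)$ turns the model ODE into $(r^2 g')'=r^2 k(r)\,g$, so $r^2 g'(r)=\int_0^r s^2 k(s)\,g(s)\,ds$. Convexity of $\phi$ together with $\phi(0)=0$ forces $g$ to be nondecreasing, whence
$$\frac{g'(r)}{g(r)}\leq \frac{1}{r^2}\int_0^r s^2 k(s)\,ds.$$
Integrating from a fixed $r_0>0$ and applying Fubini,
$$\int_{r_0}^{\infty}\frac{1}{t^2}\int_0^t s^2 k(s)\,ds\,dt=\frac{1}{r_0}\int_0^{r_0} s^2 k(s)\,ds+\int_{r_0}^{\infty} s\,k(s)\,ds,$$
and both terms are finite by hypothesis (ii) of Theorem \ref{thm5.1} (for the first, note $s^2 k(s)\leq r_0\cdot s\,k(s)$ on $[0,r_0]$). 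Therefore $g$ is bounded by some constant $C$, $\phi(r)\leq Cr$, and $\text{vol}(B_r)\leq \pi C\,r^2$ for all $r\geq r_0$.

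The main obstacle is phrasing the geodesic-polar area bound rigorously in the presence of a possibly nontrivial cut locus, but the sign condition $\kappa\leq 0$ rules out conjugate points so that $\exp_{p_0}$ stays a local diffeomorphism everywhere, and the argument reduces cleanly to the Sturm-type comparison and the logarithmic-derivative estimate above; the whole calculation is driven by the integrability hypothesis $\int_0^\infty sk(s)\,ds<\infty$ coming from Theorem \ref{thm5.1}.
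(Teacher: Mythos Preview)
Your proof is correct and follows the same overall strategy as the paper: compare the area density $\rho$ in geodesic polar coordinates to the solution $\phi$ of $\phi''=k\phi$ on a rotationally symmetric model, then use the linear growth $\phi(r)\leq Cr$ to conclude. The paper quotes the Rauch comparison theorem for $g\leq h$ (your $\rho\leq\phi$) and cites Lemma~4.5 of \cite{kn:G-W} for the bound $h(r)\leq c_0 r$, handling the cut locus by restricting to the star-shaped domain $U\subset T_{p_0}M$ that maps diffeomorphically onto $M\setminus\mathcal C(p_0)$. You instead prove both ingredients by hand: the Wronskian computation is precisely the two-dimensional Rauch argument, and your substitution $\phi=rg$ together with the logarithmic-derivative estimate $\int_{r_0}^\infty g'/g<\infty$ is a clean, self-contained proof of the Greene--Wu growth bound. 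Your treatment of the cut locus---observing that $\kappa\leq0$ rules out conjugate points so that $\exp_{p_0}$ is a local diffeomorphism everywhere, whence the pulled-back area of the Euclidean $r$-disk dominates $\mathrm{vol}(B_r)$---is a legitimate alternative to the paper's restriction to $U$. The net effect is that your argument is longer but entirely self-contained, while the paper's is shorter by outsourcing the two analytic lemmas.
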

\begin{proof}
Let $M_o$ be a model with metric $dr^2+h^2dt^2$ where $h''/h=k$  the function 
$k(s)$ defined in theorem \ref{thm5.1} item (ii). By lemma 4.5 of \cite{kn:G-W}
there is a constant $c_0>1$ such that $r\leq h(r)\leq c_0r$. 

Set $\exp=\exp_{p_0}\colon\R^2\to M,\ \R^2=T_{p_0}M.$ Let $F$ be the set 
$M\setminus \mathcal{C}(p_0)$ where $\mathcal{C}(p_0)$ is the cut locus of 
$p_0.$ Let $U$ be the closure of the connected component of $\exp^{-1}(F)$ 
passing at the origin. The metric into $U$ endowed by $\exp$ has the expression 
$dr^2+g^2dt^2.$  If $B_r$ is the geodesic ball with 
center $p_0$ then 
\[
 \text{vol}_M(B_r)=\int_{U\cap\{|z|\leq r\}}dM,
\]
By the Rauch comparison theorem we have $g(r)\leq h(r)\leq c_0r$ for all $ 
r_0\leq r\leq r^\star,$ where $[r_0,\ r^\star]$ is the interval where the 
radial geodesic $\gamma(r)$ is into $U$. Hence 
\begin{equation}
 \text{vol}_M(B_r)\leq  \text{vol}_M(B_{r_0})+c_1\int_{r_0}^rsds\leq 
c_2r^2\quad r\geq r_0.
\end{equation}

\end{proof}

\section{Proof of theorem \ref{th2}}
 It is 
well know that $\text{vol}(B_r)\leq c_0r^2$ of geodesic balls imply those 
surfaces are parabolic. For example 
\[
 \int_{r_0}^\infty\frac{rdr}{\text{vol}(B_r)}\geq 
\text{const.}\int_{r_0}^r\frac{ds}{s}=\text{const.}\ln(r/r_0),\quad r>r_0,
\]
and by 
\cite{kn:Grig} theorem 11.14 $M$ is parabolic. Then all surfaces of theorem 2 
are parabolic.


\begin{thebibliography}{klmnp}


\bibitem{kn:A-A} R. Aiyama and K. Akutagawa: {\em Kenmotsu-Bryant type
representation formulas for constant mean curvature surfaces in 
$\mathbb{H}^{3}(-c^{2)}$ and $\mathbb{S}_{1}^{3}(c^{2)}$.} Ann. Global
Anal. Geom. 17 (1998), 49-75.

\bibitem{kn:A} K. Akutagawa: {\em On spacelike hypersurfaces with
constant mean curvature in the de Sitter space.} Math. Z. 196 
(1987), 13-19.

\bibitem{kn:B} R. Bryant: {\em Surfaces of mean curvature one in hyperbolic 
space,} Astérisque 154-155 (1987), 321-347.

\bibitem{kn:C-H-R-1} P. Collin, L. Hauswirth e H. Rosenberg: {\em The
Gaussian image of mean curvature one surfaces in $\mathbb{H}^{3}$ of finite 
total curvature.} Journal of Differential Geometry 60 (2002), 55-101.

\bibitem{kn:C-H-R-2} P. Collin,L. Hauswirth and H. Rosenberg: {\em The
geometry of finite topology Bryant surfaces.} Annals of Math, 153 (2001) 
623-659.

\bibitem{kn:F} S. Fujimori: {\em Spacelike CMC-1 surfaces with elliptic ends
in de Sitter 3-space.} Hokkaido Math. Journal 35 (2006), 289-320.

\bibitem{kn:F-R-U-Y-Y} S. Fujimori, W. Rossman, M. Umehara, K. Yamada
and S.D. Yang: {\em Spacelike mean curvature one surfaces in de Sitter
3-space.} Com. in Anal and Geom. vol 17, number 3 (2009), 383-427.

\bibitem{kn:G-W} R.E. Green and H. Wu: {\em Function theory on manifolds which 
posses a pole.} 
Lectures Notes in Mathematics No. 699. Edited by A. Dold and B. Eckmann, 
Spinger-Verlag, Berlin Heidelberg
New York (1979).

\bibitem{kn:Grig} A. Grigor'yan: {\em Heat Kernel and Analysis on manifolds.}   
 Studies in Advanced Mathematics V. 47, AMS, International Press (2009).


\bibitem{kn:H} A. Hubber: {\em On subharmonic functions and differential 
geometry in the large.} Comment. Math. Helv. 32 (1957), 13-72.

\bibitem{kn:I-P-F-T} S. Izumiya, D. Pei, M.C. Romero Fuster and M. Takahashi
: {\em The horospherical geometry of submanifolds in hyperbolic space.} 
Journal of the London Math. Soc. (2004).

\bibitem{kn:J-Meeks} L. P. Jorge and W. Meeks III: {\em The topology of
complete minimal surfaces of finite total Gaussian curvature.} Topology,
Vol. 22 N\underline o 2, 203-221 (1983).

\bibitem{kn:J-M} L.P. Jorge and F. Mercuri: {\em The Gauss map of a complete
non flat minimal surface in $\mathbb{R}^{3}$ with finite total curvature,} 2018.

\bibitem{kn:K} Y. Kawakami: {\em Ramification estimatives for the hyperbolic 
Gauss map.} Osaka J. Math 46 (2009), 1059-1076.

\bibitem{kn:O} R. Osserman: {\em Global properties of minimal surfaces in $
{\bf E^3}$ and ${\bf E^n}.$} Annals of Mathematics, Vol. 80,
340-364 (1964).

\bibitem{kn:R} W. Rossman: {\em Mean curvature one surfaces in hyperbolic 
space, and their relashionship to minimal surfaces in euclidian space.} 
The Jornal of Geometric Analysis, vol. 11, number 4, (2001).

\bibitem{kn:U-Y-R} W. Rossman, M. Umehara and K. Yamada: {\em Mean curvature 1 
surfaces in hyperbolic 3-space with low total curvature I.} Hiroshima Math 
Journal 34 (2004), 21-56.

\bibitem{kn:U-Y-1} M. Umehara amd K. Yamada: {\em Complete surfaces of constant 
mean curvature-1 in the hyperbolic 3-space.} Ann. of Math. 137 (1993), 611-638.

\bibitem{kn:U-Y-2} M. Umehara and K. Yamada: {\em A duality on CMC-1 surfaces in 
hyperbolic
space, and a hyperbolic analogue of the Osserman inequality.} Tsukuba J. Math 
vol. 21, 
No. 1 (1997), 229-237.

\bibitem{kn:Z} Z. Yu: {\em Value of distribution of hyperbolic Gauss maps.} 
Proc. Amer. Math. Soc. 125 (1997), 2997-3001.

\end{thebibliography}
\end{document}